\newcommand{\vertiii}[1]{{\left\vert\kern-0.25ex\left\vert\kern-0.25ex\left\vert #1
    \right\vert\kern-0.25ex\right\vert\kern-0.25ex\right\vert}}
\newcommand{\bx}{{\bf x}}
\begin{document}
\title{A Multilevel Monte Carlo Ensemble Scheme for Solving Random Parabolic PDEs}
\author{
Yan Luo \thanks{School of Mathematical Sciences, University of Electronic Science and Technology of China, No.2006, Xiyuan Ave, West Hi-Tech Zone, Chengdu, Sichuan 611731, China and School of Mathematics, Sichuan University, No.24 South Section 1, Yihuan Road, Chengdu, Sichuan 610064, China. Research supported by the Young Scientists Fund of the National Natural Science Foundation of China grant 11501088.}
\and Zhu Wang\thanks{Department of Mathematics,
University of South Carolina, 1523 Greene Street, Columbia, SC 29208, USA (\email{wangzhu@math.sc.edu}). Research  supported by the
U.S. National Science Foundation grant DMS-1522672 and the U.S. Department of Energy grant DE-SC0016540.}
}
\maketitle

\begin{abstract}
A first-order, Monte Carlo ensemble method has been recently introduced for solving parabolic equations with random coefficients in \cite{luo2017ensemble}, which is a natural synthesis of the ensemble-based, Monte Carlo sampling algorithm and the ensemble-based, first-order time stepping scheme.
With the introduction of an ensemble average of the diffusion function, this algorithm
leads to a single discrete system with multiple right-hand sides
for a group of realizations, which could be solved more efficiently than
a sequence of linear systems.
In this paper, we pursue in the same direction and develop a new multilevel Monte Carlo ensemble method for solving random parabolic partial differential equations.
Comparing with the approach in \cite{luo2017ensemble}, this method possesses a high-order accuracy in time and further reduces the computational cost by using the multilevel Monte Carlo method.
Rigorous numerical analysis shows the method achieves the optimal rate of convergence.
Several numerical experiments are presented to illustrate the theoretical results.
\end{abstract}

\begin{keywords}
ensemble method, multilevel Monte Carlo, random parabolic PDEs
\end{keywords}

\section{Introduction}
In this paper, we consider numerical solutions to the following unsteady
heat conduction equation in a random, spatially varying medium: to find a random function, $u:
\Omega\times\overline{D}\times [0,T]\rightarrow \mathbb{R}$
satisfying almost surely (a.s.)
\begin{equation}
\left\{
\begin{aligned}
&u_t(\omega, \mathbf{x}, t)-\nabla\cdot[(a(\omega,\mathbf{x})\nabla u(\omega, \mathbf{x}, t)]=f(\omega, \mathbf{x},  t), &\text{ in } \Omega\times D\times[0, T] \\
 &u(\omega,\mathbf{x}, t)=g(\omega,{\bf x},t), &\text{ on } \Omega\times \partial D\times [0, T] \\
 &u(\omega, \mathbf{x}, 0) = u^0(\omega,{\mathbf{x}}), &\text{ in }\Omega \times D
 \end{aligned}
 \right.
 ,
 \label{eq:rand}
 \end{equation}
where $D$ is a bounded Lipschitz domain in $\mathbb{R}^d$ and $(\Omega, \mathcal{F}, P)$ is a probability space with the sample space $\Omega$, $\sigma$-algebra $\mathcal{F}$, and probability measure $P$; 
diffusion coefficient $a: \Omega\times D\rightarrow \mathbb{R}$ and body force $f: \Omega\times D\times [0, T] \rightarrow \mathbb{R}$ are random fields with continuous and bounded covariance functions.

Many numerical methods, either intrusive or non-intrusive, have been developed for random partial differential equations (PDEs), see, e.g., in the review papers \cite{gunzburger2014stochastic,xiu2009fast} and the references therein. 
For the random steady or unsteady heat equation, non-intrusive numerical methods such as Monte Carlo methods are known for easy implementation but requiring a very large number of PDE solutions to achieve small errors; while intrusive methods such as the stochastic Galerkin or collocation approaches can achieve faster convergence but would require the solution of discrete systems that couple all spatial and probabilistic degrees of freedom \cite{babuvska2007stochastic,babuska2004galerkin,xiu2003new}. 
To improve the computational efficiency of the non-intrusive approaches, other sampling methods such as quasi-Monte Carlo, multilevel Monte Carlo (MLMC), Latin hypercube sampling and Centroidal Voronoi tessellations can be used \cite{niederreiter1992random,helton2003latin,du1999centroidal,romero2006comparison}.
In particular, the MLMC method is designed to greatly reduce the computational cost by performing most simulations at a low accuracy, while running relatively few simulations at a high accuracy. It was first introduced by Heinrich \cite{heinrich2001multilevel} for the computation of high-dimensional, parameter-dependent integrals and was analyzed extensively by Giles \cite{giles2008multilevel,giles2008improved} in the context of stochastic differential equations in mathematical finance. In \cite{cliffe2011multilevel}, Cliffe et al. applied the MLMC method to the elliptic PDEs with random coefficients and demonstrated its numerical superiority. Under the assumptions of uniform coercivity and boundedness of the random parameter, numerical error of the MLMC approximation has been analyzed in \cite{barth2011multi}. The result was extended in \cite{charrier2013finite} for random elliptic problems with weaker assumptions on the random parameter and a limited spatial regularity.

Overall, the above mentioned sampling methods are ensemble-based. 
To quantify probabilistic uncertainties in a system governed by random PDEs, an ensemble of independent realizations of the random parameters needs to be considered. 
In practice, this process would involve solving a group of deterministic PDEs corresponding to all the realizations.
A straightforward solution strategy is to find numerical approximate solutions of the deterministic PDEs from a sequence of discrete linear systems.
Obviously, this approach ignores any possible relationships among the group members, thus cannot improve the overall computational efficiency.
To speed up the group of simulations, current active research mainly starts from the perspective of numerical linear algebra, and develops iterative algorithms that can take advantage of the relationship in the sequence of discrete systems.
For instance, subspace recycling techniques such as GCRO with deflated restarting have been introduced in \cite{parks2006recycling} for accelerating the solutions of slowly-changing linear systems, which is further developed in \cite{ahuja2010krylov} for climate modeling and uncertainty quantification applications.
For sequences sharing a common coefficient matrix, block iterative algorithms \cite{gutknecht2006block,meng2014block,o1980block,o1987parallel,simoncini1996convergence} have been developed to solve the system with many right-hand sides. The algorithms have been used to accelerate convergence even when there is only one right-hand side in \cite{chronopoulos2010block,o1987parallel}.
The block version of GCRO with deflated restarting was introduced in \cite{parks2016block}, and its high-performance implementation is available in the $\mathsf{Belos}$ package of the Trilinos project developed at US Sandia National Laboratories.

Recently, the Monte Carlo ensemble method was introduced by the authors of this paper for solving the random heat equations in \cite{luo2017ensemble}. This method is motivated by the ensemble-based time stepping algorithm, which was proposed for solving Navier-Stokes incompressible flow ensembles in \cite{jiang2014algorithm,jiang2015higher,jiang2015analysis,jiang2015numerical,takhirov2015time,jiang2017second} and for simulating ensembles of parameterized Navier-Stokes flow problems in \cite{gunzburger2016efficient,gunzburger2017second}. It has been extended to MHD flows in \cite{mohebujjaman2017efficient} and to low-dimensional surrogate models in \cite{gunzburger2017ensemble,Gunzburger2016higher}.
The main idea is to manipulate the numerical scheme so that all the simulations in the ensemble could share a common coefficient matrix.
As a consequence, simulating the ensemble only requires to solve a single linear system with multiple right-hand sides, which could be easily handled by a block iterative solver and, thus, improves the overall computational efficiency.
Thus, the Monte Carlo ensemble method was proposed in \cite{luo2017ensemble} for synthesizing a first-order, ensemble-based time-stepping and the ensemble-based, Monte Carlo sampling method in a natural way, which speeds up the numerical approximation of the random parabolic PDE solutions and other possible quantities of interest.
However, it is known that the Monte Carlo method, although easy for implementations, is a computationally expensive random sampling approach. Therefore, in this paper, we develop a new method for solving the same random heat equations with a better accuracy and efficiency: the new method is second-order accurate in time, which improves the temporal accuracy of our previous work; it employs the idea of multilevel Monte Carlo methods, which improves the computational efficiency comparing with the Monte Carlo. We further perform theoretical analysis on the method and present numerical tests that illustrate our theoretical findings. 
Upon the completion of this paper, we found the second-order ensemble-based time-stepping scheme had been discussed in the preprint \cite{fiordilino2017ensemble}, however, without using the efficient sampling method in uncertainty quantification. 

The rest of this paper is organized as follows.
 In Section \ref{sec:notation}, we present some notation and mathematical preliminaries.
 In Section \ref{sec:algorithm}, we introduce the multilevel Monte Carlo ensemble scheme in the context of finite element (FE) methods.
In Section \ref{sec:analysis}, we analyze the proposed algorithm and prove its stability and convergence.
Numerical experiments are presented in Section \ref{sec:num}, which illustrate the effectiveness of the proposed scheme on random parabolic problems.
A few concluding remarks are given in Section \ref{sec:con}.

\section{Notation and preliminaries}\label{sec:notation}

Denote the $L^2(D)$ norm and inner product by $\|\cdot\|$ and $(\cdot,\cdot)$, respectively.
 Let $W^{s, q}(D)$ be the Sobolev space of functions having generalized
 derivatives up to the order $s$ in the space $L^q(D)$, where $s$ is a nonnegative integer and $1\leq q\leq +\infty$.
 The equipped Sobolev norm of $v\in W^{s,q}(D)$ is denoted by $\|v\|_{W^{s,q}(D)}$.
 When $q=2$, we use the notation $H^s(D)$ instead of $W^{s, 2}(D)$.
 As usual, the function space $H_0^1(D)$
 is the subspace of $H^1(D)$ consisting  of functions that vanish
 on the boundary of $D$ in the sense of trace, equipped with the
 norm $\|v\|_{H^1_0(D)}=\left(\int_D|\nabla v|^2\, d\bx\right)^{1/2}$.
 When $s= 0$, we shall keep the notation with $L^q(D)$ instead of $W^{0, q}(D)$.
 The space $H^{-s}(D)$ is the dual space of bounded linear
 functions on $H_0^s(D)$. A norm for $H^{-1}(D)$ is defined by
 $ \|f\|_{-1}=\sup\limits_{0\neq v\in H_0^1(D)}\frac{(f,v)}{\|\nabla
 v\|}$.
 
 Let $(\Omega,\mathcal{F},P)$ be a complete probability space. If $Y$ is a random variable in the space and belongs to $L_P^1(\Omega)$, its expected value is defined by
 $$
 \mathbb{E}[Y]=\int_{\Omega}Y(\omega)dP(\omega).
 $$
 \par
 With the multi-index notation, $\alpha=(\alpha_1,\ldots,\alpha_d)$ is a $d$-tuple of nonnegative
 integers with the length $\alpha$ is given by $|\alpha|=\sum_{i=1}^d\alpha_i$. The stochastic Sobolev spaces
 $\widetilde{W}^{s,q}(D)=L^q_P(\Omega,W^{s,q}(D))$ containing
 stochastic functions, $v:\Omega\times D\rightarrow R$, that are
 measurable with respect to the product $\sigma$-algebra $\mathcal{F}\bigotimes
 B(D)$ and equipped with the averaged norms $\|v\|_{\widetilde{W}^{s,q}(D)}=(E[\|v\|^q_{W^{s,q}(D)}])^{1/q}
 =(E[\sum_{|\alpha|\leq s}\int_D|\partial^\alpha v|^qd\bx])^{1/q},1\leq
 q<+\infty$. Observe that if $v\in \widetilde{W}^{s,q}(D)$, then $v(\omega,\cdot)\in
 W^{s,q}(D)$ a.s. and $\partial^\alpha v(\cdot, \bx)\in
 L^q_P(\Omega)$ a.e. on $D$ for $\forall |\alpha|\leq s$. 
 In particular, $\widetilde{W}^{s,2}(D)$ is denoted by $\widetilde{H}^s(D)\simeq L^2_P(\Omega)\bigotimes  H^s(D)$. In this paper, we consider the tensor product Hilbert space $H=\widetilde{L}^2(H^1_0(D); 0,T)\simeq L^2_P(\Omega; H_0^1(D); 0,T)$
 endowed with the inner product $(v,u)_H\equiv E\left[\int_0^T\int_D\nabla v\cdot\nabla u\,d\bx\,dt\right]$.

\section{Ensemble-based multilevel Monte Carlo method}\label{sec:algorithm}
Given statistical information on the inputs of a random/stochastic PDE, uncertainty quantification implements the task of determining statistical information about an output of interest that depends on the PDE solutions. 
When stochastic sampling methods such as the Monte Carlo are used to solve \eqref{eq:rand}, one has to find approximate solutions to an ensemble of independent realizations, that is, deterministic PDEs at randomly selected sample values. 
Usually, each numerical simulation is implemented separately, thus the total computational cost is simply multiplied as the sampling set becomes larger. 
To improve the efficiency, we propose an ensemble-based multilevel Monte Carlo method in this paper, which is an extension of the Monte Carlo ensemble method we introduced in \cite{luo2017ensemble}. 
The new approach outperforms the previous one in both accuracy and efficiency, which is due to the combination of a second-order, ensemble-based time stepping scheme and the multilevel Monte Carlo method.

Next, we present the algorithm in the context of numerical solutions to random PDEs \eqref{eq:rand}.  
For the spatial discretization, we use conforming finite elements, although other numerical methods could be applied as well. 
To fit in the hierarchic nature of multilevel Monte Carlo methods, we consider a sequence of quasi-uniform meshes comprising a set of k-shape regular triangles (or tetrahedra), $\{\mathcal{T}_l\}_{l=0}^L$, for a polygonal (or polyhedral) domain $D$. 
Denote the mesh size of $\mathcal{T}_{l}$ by
\begin{equation*}
h_l = \max\limits_{K\in \mathcal{T}_l}{\textrm{diam } K}.
\end{equation*}
Assume the sequence is generated by uniform mesh refinements satisfying 
\begin{equation}
\label{eq:meshsize}
h_l=2^{-l}h_0. 
\end{equation}
Define the function space 
$H_g^1(D) = \{v\in H^1(D): v|_{\partial D}= g\}$
and the FE space 
\begin{equation*}
V_l^g:=\{v\in H_g^1(D) \cap H^{m+1}(D): v|_K
\text{ is a polynomial of degree}\,\,  m  \text{ for }\forall
K\in \mathcal{T}_l\}
\end{equation*}
for non-negative integer $m$. 
%
The sequence of finite element spaces satisfies 
$$
V_0^g\subset V_1^g \subset \cdots \subset
V_l^g \subset \cdots \subset V_L^g.
$$
Denoted by $u_l(\omega, \bx, t_n)$ the finite element solution in $V_l^g$ at the time instance $t_n$. 
The MLMC FE solution at the $L$-th level mesh can be written as 
$$
u_L(\omega, \bx, t_n)=\sum\limits_{l=1}^{L}\big( u_l(\omega, \bx, t_n) - u_{l-1}(\omega, \bx, t_n) \big)+u_0(\omega, \bx, t_n).
$$
By linearity of the expectation operator $\mathbb{E}[\cdot]$, we have 
\begin{eqnarray*}
\mathbb{E}\big[ u_L(\omega, \bx, t_n) \big]&=&\mathbb{E}\Big[\sum\limits_{l=1}^L\big(u_l(\omega, \bx, t_n) - u_{l-1}(\omega, \bx, t_n) \big)+u_0(\omega, \bx, t_n)\Big]\\
&=&\sum\limits_{l=1}^L\mathbb{E}\big[ u_l(\omega, \bx, t_n) - u_{l-1}(\omega, \bx, t_n) \big]+\mathbb{E}\big[ u_0(\omega, \bx, t_n) \big].
\end{eqnarray*}
Numerically, the expected value of the FE solution on the $l$-th level, $\mathbb{E}[u_l(\omega,\bx, t_n)]$ is approximated by the sampling average  $\Psi_{J_l}^n = \Psi_{J_l}[u_l(\omega,\bx,t_n)]=\frac{1}{J_l}\sum_{j=1}^{J_l}u_{l}(\omega_j,\bx,t_n)$, where $J_l$ is the number of selected samples. Correspondingly, $\mathbb{E}[u_L(\omega, \bx, t_n)]$ is approximated by   
\begin{equation}
\Psi[u_L(\omega, \bx, t_n)]:= \sum\limits_{l=1}^L\big(\Psi_{J_l}[u_l(\omega, \bx, t_n)-u_{l-1}(\omega, \bx, t_n)]\big)
+\Psi_{J_0}[u_0(\omega, \bx, t_n)].
\label{E_L}
\end{equation}

It is seen that, at each mesh level, a group of simulations needs to be implemented.   
In order to improve the computational efficiency, we introduce the following 
ensemble-based multilevel Monte Carlo (EMLMC) method.

For simplicity of presentation, we assume that, at the $l$-th level, a uniform time partition on $[0, T]$ with the time step $\Delta t_l$ is used for the simulations  and further set $N_l= T/\Delta t_l$; a set of $J_l$ samples are taken that are independent, identically distributed (i.i.d.), and functions at random samples $\{\omega_j\}_{j=1}^{J_l}$ are denoted by $a_j\equiv a(\omega_j,\cdot)$, $f_j\equiv f(\omega_j,\cdot, \cdot)$, $g_j\equiv g(\omega_j,\cdot, \cdot)$, and $u_j^0\equiv u^0(\omega_j,\cdot)$, and define the ensemble mean of the diffusion coefficient functions by
$$\overline{a}_l:=\frac{1}{J_l}\sum\limits_{j=1}^{J_l} a(\omega_j,{\mathbf{x}}).$$
Here, we note that the corresponding exact solutions $\{u(\omega_j,{\bf{x}}, t)\}_{j=1}^{J_l}$ are i.i.d.
Let $u_{j,l}^n=u_{l}(\omega_j,\mathbf{x},t_n)$, the finite element approximation of $u(\omega_j,{\bf{x}}, t_n)$ at the $l$-th level.

The {\em ensemble-based multilevel Monte Carlo method (EMLMC)} applied to \eqref{eq:rand} solves the following group of simulations at the $l$-th level: for $j=1, \ldots, J_l$, 
given $u_{j,l}^{0}$ and $u_{j,l}^1$, to find $u_{j,l}^{n+1}\in V_l^g$ such that, 
\begin{equation}
\begin{aligned}
&\left(\frac{3u_{j,l}^{n+1}-4u_{j,l}^n+u_{j,l}^{n-1}}{2\Delta t_l},v_l\right)
+(\overline{a}_l\nabla u_{j,l}^{n+1},\nabla v_l)  \\
&\hspace{1cm}=- \big( (a_j-\overline{a}_l)\nabla(2 u_{j,l}^n-u_{j,l}^{n-1}),\nabla v_l \big) + (f_{j}^{n+1},v_l),  \quad \forall \,v_l\in V_l^0,
\end{aligned}
\label{eqn:ens_rand}
\end{equation}
for $n=1,\ldots, N_l-1$. 
Once the numerical solutions at all the $L$ levels are found, the EMLMC approximates the SPDEs solution at the time instance $t_n$, $\mathbb{E}[u(t_n)]$, by \eqref{E_L}. 
Meanwhile, given a quantity of interest $Q(u)$, one can analyze the outputs from the ensemble simulations, $Q(u_h(\omega_1, \cdot, \cdot))$, $\ldots,  Q(u_h(\omega_J, \cdot, \cdot))$, for extracting the underlying stochastic information of the system.

It is seen that the EMLMC naturally combines the ensemble-based sampling method and the ensemble-based time stepping algorithms, and inherits advantages from both sides. As the MLMC, the method can reduce the computational cost by balancing the time step size, mesh size, and the number of samples at each level. Since the coefficient matrix of the discrete linear system \eqref{eqn:ens_rand} is independent of $j$,  for evaluating $J_l$ realizations, one only needs to solve one linear system with multiple right-hand sides. This leads to great computational savings: when the number of degrees of freedom is small, one can perform the LU factorization once instead of $J_l$ times; when the number of degrees of freedom is large, one can use the block iterative algorithms to find the ensemble solution more efficiently than solving a sequence of simulations. 
Next, we will analyze the stability and asymptotic error estimate of the EMLMC method.  

\section{Stability and error estimate}\label{sec:analysis}
To simplify the presentation, we only consider equation  \eqref{eq:rand} with the homogeneous boundary condition (that is, $g=0$ and $u_{j,l}^{n+1}\in V_l^0$ in the FE weak form \eqref{eqn:ens_rand}), while the nonhomogeneous cases can be similarly analyzed by incorporating the method of shifting. 
Meanwhile, we will include numerical test cases with nonhomogeneous boundary conditions in Section 5.
As the EMLMC approximation is based on the MC solutions at various levels, we first analyze the ensemble-based single-level Monte Carlo in Subsection \ref{sec:analysis_1l} and derive the error estimate for EMLMC in Subsection \ref{sec:analysis_ml}.

Assume the exact solution of \eqref{eq:rand} is smooth enough, in particular, 
\begin{equation*}
u_j\in \widetilde{L}^2(H_0^1(D) \cap H^{m+1}(D); 0,T)
\cap \widetilde{H}^1(H^{m+1}(D); 0,T) \cap \widetilde{H}^2(L^2(D); 0,T)
\end{equation*}
and suppose
$$f_j\in {\widetilde{L}}^2 \left({H}^{-1}(D); 0,T\right).$$
Assume the following two conditions hold: 
\begin{enumerate}
\item[$\mathsf{(i)}$] There exists a positive constant $\theta$ such that
\begin{equation*}
P\{\omega\in\Omega;
\min\limits_{\mathbf{x}\in\overline{D}}a(\omega,\mathbf{x})>\theta\}=1.
\end{equation*}
\item[$\mathsf{(ii)}$] There exists a positive constant $\theta_+$, for $l=0, \ldots, L$, such that
\begin{equation*}
P\{\omega_j\in\Omega; |a(\omega_j,{\bf x})-\overline{a}_l|_{\infty}\leq\theta_+\}=1.
\end{equation*}
\end{enumerate}
Here, condition $\mathsf{(i)}$ guarantees the uniform coercivity a.s. and condition $\mathsf{(ii)}$ gives an upper bound of the distance from coefficient $a(\omega_j,{\bf x})$ to the ensemble average $\overline{a}_l$ a.s.

\subsection{Ensemble-based single-level Monte Carlo finite element method}\label{sec:analysis_1l}

When $\mathbb{E}[u(t_n)]$ is numerically approximated by $\Psi_{J_l}^n$, the associated approximation error can be separated into two parts:
\begin{eqnarray*}
\mathbb{E}[u(t_n)]-\Psi_{J_l}^n
=\left(\mathbb{E}[u_j(t_n)]-\mathbb{E}[u_{j,l}^n]\right)
+\left(\mathbb{E}[u_{j,l}^n]-\Psi_{J_l}^n\right)
:=\mathcal{E}_l^n+\mathcal{E}_S^n, 
\end{eqnarray*}
where we use the fact that $\mathbb{E}[u(t_n)]=\mathbb{E}[u_j(t_n)]$. 
The finite element discretization error, $\mathcal{E}_l^n=\mathbb{E}[u_j(t_n)-u_{j,l}^n]$, is controlled by the size of spatial triangulations $\mathcal{T}_l$ and time step; while the statistical error, $\mathcal{E}_S^n=\mathbb{E}[u_{j,l}^n]-\Psi_{J_l}^n$, is dominated by the number of realizations. 
Next, we will first discuss the stability of the ensemble scheme (\ref{eqn:ens_rand}) at the $l$-th level (Theorem \ref{th:stability_R}), derive the bounds for $\mathcal{E}_S^n$ (Theorem \ref{th:E_S}) and $\mathcal{E}_l^n$ (Theorem \ref{th:error}), and then obtain the asymptotic error estimation (Theorem \ref{th:Psi}).

\begin{theorem} \label{th:stability_R}
Under conditions $\mathsf{(i)}$ and $\mathsf{(ii)}$, the scheme (\ref{eqn:ens_rand}) is stable provided that
 \begin{equation}
 \theta > 3\theta_+.
 \label{con1}
 \end{equation}
Furthermore, the numerical solution to (\ref{eqn:ens_rand}) satisfies
\begin{equation}
\begin{aligned}
&\frac{1}{4}\mathbb{E}\big[\|u_{j,l}^{N_l}\|^2\big]+\frac{1}{4}\mathbb{E}\big[\|2u_{j,l}^{N_l}-u_{j,l}^{N_l-1}\|^2\big]
+\frac{\theta}{2}\Delta t_l\mathbb{E}\big[\|\nabla
u_{j,l}^{N_l}\|^2] \\
&\qquad+\Big(\frac{\theta}{3}-\theta_+\Big)\Delta t_l\sum\limits_{n=1}^{N_l-1}\mathbb{E}\big[\|\nabla
u_{j,l}^n\|^2\big] \\
&\leq 
\frac{\Delta t_l}{2(\theta-3\theta_+)}\sum\limits_{n=1}^{N_l-1}\mathbb{E}\big[\|f_j^{n+1}\|_{-1}^2\big]
+\frac{1}{4}\mathbb{E}\big[\|u_{j,l}^1\|^2\big]+\frac{1}{4}\mathbb{E}\big[\|2u_{j,l}^1-u_{j,l}^0\|^2\big]\\
 & +\frac{\theta}{2}\Delta t_l\mathbb{E}\big[\|\nabla u_{j,l}^1\|^2\big]+\frac{\theta}{6}\Delta t_l\mathbb{E}\big[\|\nabla u_{j,l}^0\|^2\big].
 \end{aligned}
\label{th:eq_sta}
\end{equation}
 \end{theorem}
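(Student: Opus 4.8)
The plan is to derive the energy estimate \eqref{th:eq_sta} by the standard BDF2 energy method, testing the scheme against the natural multiplier and absorbing the explicit diffusion term using condition $\mathsf{(ii)}$. First I would set $v_l = u_{j,l}^{n+1}$ in \eqref{eqn:ens_rand}. The BDF2 time-derivative term $(3u^{n+1}-4u^n+u^{n-1},u^{n+1})/(2\Delta t_l)$ is handled by the well-known algebraic identity
\begin{equation*}
(3a-4b+c,2a) = |a|^2 + |2a-b|^2 - |b|^2 - |2b-c|^2 + |a-2b+c|^2,
\end{equation*}
which produces the telescoping ``$G$-stability'' quantities $\|u^{n+1}\|^2 + \|2u^{n+1}-u^n\|^2$ seen on both sides of \eqref{th:eq_sta}, plus a nonnegative remainder $\|u^{n+1}-2u^n+u^{n-1}\|^2$ that I would simply discard. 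The implicit diffusion term $(\overline{a}_l\nabla u^{n+1},\nabla u^{n+1})$ is bounded below by $\theta\|\nabla u^{n+1}\|^2$ using condition $\mathsf{(i)}$, since $\overline{a}_l$ is an average of functions each exceeding $\theta$ a.s.

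Next I would control the two right-hand-side terms. The forcing term I would bound by duality and Young's inequality, $(f_j^{n+1},u^{n+1}) \le \|f_j^{n+1}\|_{-1}\|\nabla u^{n+1}\| \le \frac{1}{2(\theta-3\theta_+)}\|f_j^{n+1}\|_{-1}^2 + \frac{\theta-3\theta_+}{2}\|\nabla u^{n+1}\|^2$, which explains the precise constant $\tfrac{1}{2(\theta-3\theta_+)}$ appearing in \eqref{th:eq_sta} and is where the hypothesis $\theta>3\theta_+$ must be positive. The explicit, destabilizing term $-\big((a_j-\overline{a}_l)\nabla(2u^n-u^{n-1}),\nabla u^{n+1}\big)$ is the crux: using condition $\mathsf{(ii)}$ it is bounded by $\theta_+\|\nabla(2u^n-u^{n-1})\|\,\|\nabla u^{n+1}\|$, and after a Young split and the triangle inequality $\|\nabla(2u^n-u^{n-1})\|\le 2\|\nabla u^n\|+\|\nabla u^{n-1}\|$ it contributes multiples of $\theta_+\|\nabla u^{n+1}\|^2$, $\theta_+\|\nabla u^n\|^2$, and $\theta_+\|\nabla u^{n-1}\|^2$.

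After taking expectations (the exact solutions, hence the discrete realizations, being i.i.d.\ so $\mathbb{E}[\cdot]$ passes through linearly) and summing over $n=1,\ldots,N_l-1$, I would telescope the $G$-stability terms down to their values at $n=1$ and $n=N_l$, and then collect all the surviving $\|\nabla u^n\|^2$ contributions. The delicate bookkeeping is to verify that the diffusion coefficient $\theta$ minus the total explicit penalty proportional to $\theta_+$ remains nonnegative, yielding the stated factor $\frac{\theta}{3}-\theta_+$ on the summed gradient term and $\frac{\theta}{2}$ on the endpoint term; this is exactly where condition \eqref{con1}, $\theta>3\theta_+$, guarantees a strictly positive coefficient and hence stability. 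The main obstacle will be organizing this summation carefully: the explicit term couples three consecutive time levels, so the gradient norms at interior levels each receive several contributions that must be combined and bounded with the correct constants, and one must treat the boundary indices $n=1$ and $n=N_l-1$ separately to produce the endpoint terms $\frac{\theta}{2}\Delta t_l\mathbb{E}[\|\nabla u_{j,l}^1\|^2]$ and $\frac{\theta}{6}\Delta t_l\mathbb{E}[\|\nabla u_{j,l}^0\|^2]$ on the right-hand side. Once the coefficient of the summed interior gradient terms is shown to be at least $\frac{\theta}{3}-\theta_+>0$, stability follows immediately.
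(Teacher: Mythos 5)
Your proposal is correct and matches the paper's proof in all essentials: testing with $v_l=u_{j,l}^{n+1}$, the BDF2 $G$-stability identity, coercivity via condition $\mathsf{(i)}$, Young splits of the explicit $(a_j-\overline{a}_l)$ term across the three time levels, the same forcing-term split with constant $\tfrac{1}{2(\theta-3\theta_+)}$, and the split of $\theta\,\mathbb{E}[\|\nabla u_{j,l}^{n+1}\|^2]$ into telescoping pieces with weights $\tfrac{\theta}{2}$ and $\tfrac{\theta}{6}$ yielding the coefficient $\tfrac{\theta}{3}-\theta_+$ (the paper organizes this via parameters $\beta_1=2(\theta-3\theta_+)$, $\beta_2=2$, $\beta_3=1$, $C_1=\tfrac12$, which reproduces exactly your accounting). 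The only cosmetic difference is that the paper lets the telescoping produce the endpoint terms automatically rather than treating $n=1$ and $n=N_l-1$ separately.
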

\begin{proof}
Choosing $v_h=u_{j,l}^{n+1}$ in (\ref{eqn:ens_rand}), we obtain \\
\begin{equation}
\begin{aligned}
&\left(\frac{3u_{j,l}^{n+1}-4u_{j,l}^n+u_{j,l}^{n-1}}{2\Delta t_l},u_{j,l}^{n+1}\right) 
+\left(\overline{a}_l\nabla u_{j,l}^{n+1},\nabla u_{j,l}^{n+1} \right)\\
&\qquad=-\left((a_j-\overline{a}_l)\nabla (2u_{j,l}^n-u_{j,l}^{n-1}),\nabla u_{j,l}^{n+1} \right) 
+ \left(f_{j}^{n+1}, u_{j,l}^{n+1}\right).
\end{aligned}
\end{equation}
Multiplying both sides by $\Delta t_l$, integrating over the probability space and considering the coercivity, we get
\begin{equation}
\begin{aligned}
&\frac{1}{4}\mathbb{E}\big[\|u_{j,l}^{n+1}\|^2
+\|2u_{j,l}^{n+1}-u_{j,l}^n\|^2\big]
-\frac{1}{4}\mathbb{E}\big[\|u_{j,l}^n\|^2+\|2u_{j,l}^n-u_{j,l}^{n-1}\|^2\big]\\
& + \frac{1}{4}\mathbb{E}\big[\|u_{j,l}^{n+1}-2u_{j,l}^n+u_{j,l}^{n-1}\|^2\big]
+\Delta t_l \theta\,\mathbb{E}\big[ \|\nabla u_{j,l}^{n+1}\|^2\big] \\
&\leq \Delta t_l \mathbb{E}\big[ \big|\big(f_j^{n+1},u_{j,l}^{n+1}\big)\big| \big]
+\Delta t_l \theta_+ \mathbb{E} \big[ \big|\big(\nabla(2u_{j,l}^n-u_{j,l}^{n-1}),\nabla u_{j,l}^{n+1}\big)\big| \big]. 
\label{S1}
\end{aligned}
\end{equation}
Apply Young's inequality to the terms on the right-hand side (RHS), we have, for any
$\beta_i>0, i=1,2,3$,
\begin{equation}
\mathbb{E}\big[ \big|(f_j^{n+1},u_{j,l}^{n+1})\big| \big]
\leq 
\frac{\beta_1}{4}\mathbb{E}\big[ \|\nabla u_{j,l}^{n+1}\|^2 \big]
+\frac{1}{\beta_1}\mathbb{E}\big[ \|f_j^{n+1}\|_{-1}^2 \big], 
\label{e00}
\end{equation}
and
\begin{equation}
\begin{aligned}
&\mathbb{E}\left[ \big| \big(\nabla(2u_{j,l}^n-u_{j,l}^{n-1}),\nabla u_{j,l}^{n+1}\big) \big| \right]
=\mathbb{E}\left[  \big|(2\nabla u_{j,l}^n,\nabla u_{j,l}^{n+1})-(\nabla u_{j,l}^{n-1},\nabla u_{j,l}^{n+1})\big| \right]\\
&\leq \frac{\beta_2+\beta_3}{2}\mathbb{E}\big[\|\nabla u_{j,l}^{n+1}\|^2\big]
+\frac{2}{\beta_2}\mathbb{E}\big[\|\nabla u_{j,l}^n\|^2\big]
+\frac{1}{2\beta_3}\mathbb{E}\big[\|\nabla u_{j,l}^{n-1}\|^2\big]. 
\end{aligned}
\label{e01}
\end{equation}
 The term $\Delta t_l\theta\mathbb{E}\big[\|\nabla u_{j,l}^{n+1}\|^2\big]$ on the left-hand side (LHS) can be
separated into following parts for any $C_1\in (0, 1)$:
\begin{equation}
\begin{aligned}
\Delta t_l\theta\mathbb{E}\big[\|\nabla u_{j,l}^{n+1}\|^2\big]
&=C_1\Delta t_l\theta\mathbb{E}\big[\|\nabla
u_{j,l}^{n+1}\|^2\big]+(1-C_1)\Delta t_l\theta\mathbb{E}\big[\|\nabla
u_{j,l}^{n+1}\|^2-\|\nabla u_{j,l}^n\|^2\big]\\
&+(1-C_1)\Delta t_l\theta\mathbb{E}\big[\|\nabla u_{j,l}^n\|^2\big].
\end{aligned}
\label{e02}
\end{equation}
Substituting (\ref{e00})-(\ref{e02}) into (\ref{S1}), we get
\begin{equation}
\begin{aligned}
&\frac{1}{4}\big(\mathbb{E}\big[\|u_{j,l}^{n+1}\|^2\big]
+\mathbb{E}\big[\|2u_{j,l}^{n+1}-u_{j,l}^n\|^2\big]\big)-\frac{1}{4}\big(\mathbb{E}\big[\|u_{j,l}^n\|^2\big]+\mathbb{E}\big[\|2u_{j,l}^n-u_{j,l}^{n-1}\|^2\big]\big)\\
&+\frac{1}{4}\mathbb{E}\big[\|u_{j,l}^{n+1}-2u_{j,l}^n+u_{j,l}^{n-1}\|^2\big]
+\Big(C_1\theta-\frac{\beta_1}{4}-\frac{\beta_2+\beta_3}{2}\theta_+\Big) \Delta t_l \mathbb{E}\big[\|\nabla
 u_{j,l}^{n+1}\|^2\big]\\
& +(1-C_1)\Delta t_l\theta\mathbb{E}\big[\|\nabla u_{j,l}^{n+1}\|^2-\|\nabla u_{j,l}^n\|^2\big]
+\Big(\frac{2}{3}(1-C_1)\theta-\frac{2\theta_+}{\beta_2}\Big) \Delta t_l \mathbb{E}\big[\|\nabla u_{j,l}^n\|^2\big]\\
&+\Big(\frac{1}{3}(1-C_1)\theta\Big)\Delta t_l\mathbb{E}\big[\|\nabla
u_{j,l}^n\|^2 -\|\nabla
 u_{j,l}^{n-1}\|^2\big]\\
 &+\Big(\frac{1}{3}(1-C_1)\theta-\frac{\theta_+}{2\beta_3}\Big)\Delta t_l\mathbb{E}\big[\|\nabla
 u_{j,l}^{n-1}\|^2\big]
 \leq \frac{\Delta t_l}{\beta_1}\mathbb{E}\big[\|f_j^{n+1}\|_{-1}^2\big].
 \end{aligned}
 \label{e03}
\end{equation}
Selecting $\beta_1=4\delta\theta_+$, $\beta_2=2$, and $\beta_3=1$ for some positive $\delta$, (\ref{e03}) becomes
\begin{equation}
\begin{aligned}
&\frac{1}{4}\mathbb{E}\big[\|u_{j,l}^{n+1}\|^2
+\|2u_{j,l}^{n+1}-u_{j,l}^n\|^2\big]-\frac{1}{4}\mathbb{E}\big[\|u_{j,l}^n\|^2+\|2u_{j,l}^n-u_{j,l}^{n-1}\|^2\big] \\
&+\frac{1}{4}\mathbb{E}\big[\|u_{j,l}^{n+1}-2u_{j,l}^n+u_{j,l}^{n-1}\|^2\big]
+\Big(C_1\theta-\frac{2\delta+3}{2}\theta_+\Big)\Delta t_l\mathbb{E}\big[\|\nabla u_{j,l}^{n+1}\|^2\big]\\
&+(1-C_1)\Delta t_l\theta\mathbb{E}\big[\|\nabla u_{j,l}^{n+1}\|^2-\|\nabla u_{j,l}^n\|^2\big]
 +\Big(\frac{2}{3}(1-C_1)\theta-\theta_+\Big) \Delta t_l \mathbb{E}\big[\|\nabla u_{j,l}^n\|^2\big]\\
&+\Big(\frac{1}{3}(1-C_1)\theta\Big)\Delta t_l\mathbb{E}\big[\|\nabla
u_{j,l}^n\|^2 -\|\nabla
 u_{j,l}^{n-1}\|^2\big]\\
&+\Big(\frac{1}{3}(1-C_1)\theta-\frac{\theta_+}{2}\Big)\Delta t_l\mathbb{E}\big[\|\nabla
 u_{j,l}^{n-1}\|^2\big]
 \leq \frac{\Delta t_l}{4\delta\theta_+}\mathbb{E}\big[\|f_j^{n+1}\|_{-1}^2\big].
 \end{aligned}
  \label{e04}
\end{equation}
Stability follows if the following conditions hold:
\begin{eqnarray}
&&C_1\theta-\frac{2\delta+3}{2}\theta_+\geq 0,\\
&&\frac{1}{3}(1-C_1)\theta-\frac{\theta_+}{2}\geq 0.
\end{eqnarray}
By taking $C_1=\frac{1}{2}$ and $\delta=\frac{\theta-3\theta_+}{2\theta_+}$, under the assumption (\ref{con1}), we have
$$
C_1\theta-\frac{2\delta+3}{2}\theta_+=\frac{\theta}{2}-\frac{\theta}{2}=0\quad \text{  and  }\quad
\frac{\theta}{3}-\theta_+> 0.
$$
Then, by dropping a positive term, (\ref{e04}) becomes
\begin{equation}
\begin{aligned}
&\frac{1}{4}\mathbb{E}\big[\|u_{j,l}^{n+1}\|^2+\|2u_{j,l}^{n+1}-u_{j,l}^n\|^2\big]
-\frac{1}{4}\mathbb{E}\big[\|u_{j,l}^n\|^2+\|2u_{j,l}^n-u_{j,l}^{n-1}\|^2\big]
\\
&+\frac{\theta}{2}\Delta t_l\mathbb{E}\big[\|\nabla u_{j,l}^{n+1}\|^2-\|\nabla u_{j,l}^n\|^2\big] +\Big(\frac{\theta}{3}-\theta_+\Big)\Delta t_l\mathbb{E}\big[\|\nabla u_{j,l}^n\|^2\big]\\
&+\frac{\theta}{6}\Delta t_l\mathbb{E}\big[\|\nabla u_{j,l}^n\|^2 -\|\nabla u_{j,l}^{n-1}\|^2\big]
 +\Big(\frac{\theta}{6}-\frac{\theta_+}{2}\Big)\Delta t_l\mathbb{E}\big[\|\nabla u_{j,l}^{n-1}\|^2\big]\\
& \leq \frac{\Delta t_l}{2(\theta-3\theta_+)}\mathbb{E}\big[\|f_j^{n+1}\|_{-1}^2\big].
 \end{aligned}
  \label{S_last}
\end{equation}
Summing (\ref{S_last}) from $n=1$ to $n=N_l-1$ and dropping two positive terms gives
\begin{equation}
\begin{aligned}
&\frac{1}{4}\mathbb{E}\big[\|u_{j,l}^{N_l}\|^2\big]+\frac{1}{4}\mathbb{E}\big[\|2u_{j,l}^{N_l}-u_{j,l}^{N_l-1}\|^2\big]
+\frac{\theta}{2}\Delta t_l\mathbb{E}\big[\|\nabla
u_{j,l}^{N_l}\|^2] \\
&\qquad+\Big(\frac{\theta}{3}-\theta_+\Big)\Delta t_l\sum\limits_{n=1}^{N_l-1}\mathbb{E}\big[\|\nabla
u_{j,l}^n\|^2\big] \\
&\leq 
\frac{\Delta t_l}{2(\theta-3\theta_+)}\sum\limits_{n=1}^{N_l-1}\mathbb{E}\big[\|f_j^{n+1}\|_{-1}^2\big]
+\frac{1}{4}\mathbb{E}\big[\|u_{j,l}^1\|^2\big]+\frac{1}{4}\mathbb{E}\big[\|2u_{j,l}^1-u_{j,l}^0\|^2\big]\\
 & +\frac{\theta}{2}\Delta t_l\mathbb{E}\big[\|\nabla u_{j,l}^1\|^2\big]+\frac{\theta}{6}\Delta t_l\mathbb{E}\big[\|\nabla u_{j,l}^0\|^2\big],
 \end{aligned}
\end{equation}
which completes the proof.
\end{proof}

Then, by using the standard error estimate for the Monte Carlo method (e.g., \cite{liu2013discontinuous}), 
we can bound the statistical error $\mathcal{E}_S^n$ as follows. 

\begin{theorem}\label{th:E_S}
Let $\mathcal{E}_S^n=\mathbb{E}[u_{j,l}^n]-\Psi_{J_l}^n$, where $u_{j,l}^n$ is the result of scheme \eqref{eqn:ens_rand} and $\Psi_{J_l}^n = \frac{1}{J_l}\sum_{j=1}^{J_l}u_{j, l}^n$.
Suppose conditions $\mathsf{(i)}$ and $\mathsf{(ii)}$, and the stability condition (\ref{con1}) hold, there is a generic positive constant $C$ independent of $J_l$ and $\Delta t_l$ such
that
\begin{equation}
\begin{aligned}
&\frac{1}{4}\mathbb{E}\big[\|\mathcal{E}_S^{N_l}\|^2\big]
+\frac{1}{4}\mathbb{E}\big[\|2\mathcal{E}_S^{N_l}-\mathcal{E}_S^{N_l-1}\|^2\big]
+\left(\frac{\theta}{3}-\theta_+\right)\Delta t_l\sum\limits_{n=1}^{N_l-1}\mathbb{E}\big[ \|\nabla\mathcal{E}_S^{n}\|^2 \big]
 \\
&\leq
 \frac{1}{J_l}\Big( \Delta t_l \sum\limits_{n=1}^{N_l}\mathbb{E}\big[\|f_j^{n}\|_{-1}^2]
 + \Delta t_l \mathbb{E}\big[\|\nabla u_{j,l}^1\|^2\big]+ \mathbb{E}\big[\|\nabla u_{j,l}^0\|^2\big]\\
 &+\mathbb{E}\big[\|u_{j,l}^1\|^2\big]+ \mathbb{E}\big[\|2u_{j,l}^1-u_{j,l}^0\|^2\big]\Big).
 \end{aligned}
 \label{th:eq_E_S}
\end{equation}
\par
\end{theorem}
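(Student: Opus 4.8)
The plan is to bypass any explicit error equation for $\mathcal{E}_S^n$ and instead reduce the three quantities on the left of \eqref{th:eq_E_S} to Monte Carlo variances, which I then control by the already-established stability bound \eqref{th:eq_sta} applied to the raw realizations. The only probabilistic input needed is the standard fact that averaging $J_l$ independent, mean-zero samples contracts the second moment by a factor $1/J_l$.

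First I would center the realizations: writing $e_j^n = u_{j,l}^n - \mathbb{E}[u_{j,l}^n]$, the definition of $\Psi_{J_l}^n$ gives $\mathcal{E}_S^n = -\frac{1}{J_l}\sum_{j=1}^{J_l} e_j^n$, and likewise $2\mathcal{E}_S^{N_l}-\mathcal{E}_S^{N_l-1} = -\frac{1}{J_l}\sum_j (2e_j^{N_l}-e_j^{N_l-1})$ and $\nabla\mathcal{E}_S^n = -\frac{1}{J_l}\sum_j \nabla e_j^n$. Since the $\{u_{j,l}^n\}_j$ are identically distributed, each $e_j^n$ has mean zero; invoking independence, the cross terms in $\mathbb{E}\big[\|\frac{1}{J_l}\sum_j e_j\|^2\big] = \frac{1}{J_l^2}\sum_{i,k}\mathbb{E}[(e_i,e_k)]$ vanish for $i\neq k$, leaving $\frac{1}{J_l}\mathbb{E}[\|e_j\|^2]$. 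Because $\mathbb{E}[\|e_j\|^2]=\mathbb{E}[\|u_{j,l}\|^2]-\|\mathbb{E}[u_{j,l}]\|^2 \le \mathbb{E}[\|u_{j,l}\|^2]$, applying this identity in $L^2(D)$ to $u_{j,l}^{N_l}$ and to $2u_{j,l}^{N_l}-u_{j,l}^{N_l-1}$, and in the gradient norm to $u_{j,l}^n$, I obtain
\begin{gather*}
\mathbb{E}[\|\mathcal{E}_S^{N_l}\|^2]\le \tfrac{1}{J_l}\mathbb{E}[\|u_{j,l}^{N_l}\|^2], \quad
\mathbb{E}[\|2\mathcal{E}_S^{N_l}-\mathcal{E}_S^{N_l-1}\|^2]\le \tfrac{1}{J_l}\mathbb{E}[\|2u_{j,l}^{N_l}-u_{j,l}^{N_l-1}\|^2], \\
\mathbb{E}[\|\nabla\mathcal{E}_S^n\|^2]\le \tfrac{1}{J_l}\mathbb{E}[\|\nabla u_{j,l}^n\|^2].
\end{gather*}

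Summing these with the weights appearing in \eqref{th:eq_E_S}, the entire left-hand side is bounded by $\frac{1}{J_l}$ times the quantity $\frac14\mathbb{E}[\|u_{j,l}^{N_l}\|^2]+\frac14\mathbb{E}[\|2u_{j,l}^{N_l}-u_{j,l}^{N_l-1}\|^2]+(\frac{\theta}{3}-\theta_+)\Delta t_l\sum_{n=1}^{N_l-1}\mathbb{E}[\|\nabla u_{j,l}^n\|^2]$. This is precisely the left-hand side of the stability estimate \eqref{th:eq_sta} after discarding the nonnegative term $\frac{\theta}{2}\Delta t_l\,\mathbb{E}[\|\nabla u_{j,l}^{N_l}\|^2]$, so Theorem \ref{th:stability_R} (whose hypotheses $\mathsf{(i)}$, $\mathsf{(ii)}$ and \eqref{con1} are assumed here) bounds it by the right-hand side of \eqref{th:eq_sta}. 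Absorbing the fixed constants $\frac{1}{2(\theta-3\theta_+)},\frac14,\frac{\theta}{2},\frac{\theta}{6}$ into a generic $C$ and enlarging $\sum_{n=1}^{N_l-1}\mathbb{E}[\|f_j^{n+1}\|_{-1}^2]$ to $\sum_{n=1}^{N_l}\mathbb{E}[\|f_j^{n}\|_{-1}^2]$ yields exactly \eqref{th:eq_E_S}, with $C$ independent of $J_l$ and $\Delta t_l$.

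The step demanding the most care is the vanishing of the cross terms $\mathbb{E}[(e_i,e_k)]$, $i\neq k$, i.e.\ the genuine independence of the numerical realizations. Strictly speaking the $u_{j,l}^n$ are coupled through the common ensemble mean $\overline{a}_l=\frac{1}{J_l}\sum_j a(\omega_j,\cdot)$, so they are only exchangeable; one must argue that, as for the i.i.d.\ exact solutions $u(\omega_j,\cdot,\cdot)$, the coefficient coupling does not introduce a positive covariance that would spoil the $1/J_l$ contraction (equivalently, treat the realizations as independent, consistent with the standard Monte Carlo error estimate invoked for this theorem). The remaining manipulations are purely algebraic.
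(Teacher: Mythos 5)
Your proof is correct and takes essentially the same route as the paper's: expand the second moment of the sample average, kill the cross terms by independence to get the $1/J_l$ variance contraction bounded by $\frac{1}{J_l}\mathbb{E}\big[\|u_{j,l}^n\|^2\big]$ (respectively $\frac{1}{J_l}\mathbb{E}\big[\|\nabla u_{j,l}^n\|^2\big]$), then invoke the stability estimate \eqref{th:eq_sta} of Theorem \ref{th:stability_R} and absorb constants, with the remaining left-hand terms treated identically. Your closing caveat is apt but does not separate you from the paper: the paper's proof likewise asserts outright that $u_{1,l}^n,\ldots,u_{J_l,l}^n$ are i.i.d.\ and silently ignores the coupling of the numerical realizations through the common ensemble mean $\overline{a}_l$, so you have reproduced the argument faithfully, gap included.
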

\begin{proof}
First, we estimate $\mathbb{E}\big[\|\nabla\mathcal{E}_S^n\|^2\big]$. 
\begin{eqnarray*}
\mathbb{E}\big[ \|\nabla\mathcal{E}_S^n\|^2 \big]&=&\mathbb{E}\Bigg[ \bigg( \frac{1}{J_l}\sum\limits_{i=1}^{J_l} \big( \nabla \mathbb{E}[ u_{i, l}^n ]- \nabla u_{i, l}^n \big ), \frac{1}{J_l}\sum\limits_{j=1}^{J_l} \big( \nabla E[ u_{j, l}^n]- \nabla u_{j, l}^n \big) \bigg) \Bigg]\\
&=&\frac{1}{{J_l}^2}\sum\limits_{i,j=1}^{J_l}\mathbb{E} \Bigg[ \bigg( \nabla \mathbb{E}[ u_l^n]- \nabla u_{i, l}^n, \nabla \mathbb{E}[ u_l^n]-\nabla u_{j, l}^n \bigg) \Bigg]\\
&=&\frac{1}{{J_l}^2}\sum\limits_{j=1}^{J_l}\mathbb{E} \Bigg[ \bigg( \nabla \mathbb{E}[ u_l^n]-\nabla u_{j, l}^n, 
\nabla \mathbb{E}[ u_l^n]- \nabla u_{j, l}^n \bigg) \Bigg].
\end{eqnarray*}
The last equality is due to the fact that $u_{1, l}^n,\ldots, u_{J_l, l}^n$ are i.i.d., and thus the expected value of $\big( \nabla \mathbb{E}[ u_l^n]-\nabla u_{i, l}^n, \nabla \mathbb{E}[ u_l^n]- \nabla u_{j, l}^n \big)$
is a zero for $i\neq j$. We now expand $\mathbb{E}\big[\big( \nabla \mathbb{E}[ u_l^n]-\nabla u_{j, l}^n, 
\nabla \mathbb{E}[ u_l^n]- \nabla u_{j, l}^n \big)\big]$
and use the fact that $\mathbb{E}[\nabla u_{j,l}^n] = \nabla \mathbb{E}[ u_{j,l}^n]$ and $\mathbb{E}[u_l^n]=\mathbb{E}[u_{j,l}^n]$ to obtain
$$
\mathbb{E}\big[ \|\nabla\mathcal{E}_S^n\|^2 \big]=-\frac{1}{J_l}\|\nabla\mathbb{E}[u_{j,l}^n]\|^2+\frac{1}{J_l}\mathbb{E}[\|\nabla u_{j,l}^n\|^2],
$$
which yields
$$
\mathbb{E}\big[ \|\nabla\mathcal{E}_S^n\|^2 \big]\leq
\frac{1}{J_l}\mathbb{E}\big[ \|\nabla u_{j,l}^n\|^2 \big].
$$
With the help pf Theorem \ref{th:stability_R}, we have
\begin{equation}
\begin{aligned}
&\Big(\frac{\theta}{3}-\theta_+\Big)\Delta t_l\sum\limits_{n=1}^{N_l-1}\mathbb{E}\big[\|\nabla \mathcal{E}_S^n\|^2\big]
\leq \frac{1}{J_l}\Big( \frac{\Delta t_l}{\theta-3\theta_+}\sum\limits_{n=1}^{N_l-1}\mathbb{E}\big[\|f_j^{n}\|_{-1}^2] \\
 &\qquad+ \theta\Delta t_l \mathbb{E}\big[\|\nabla u_{j,l}^1\|^2+\|\nabla u_{j,l}^0\|^2\big]
+\mathbb{E}\big[\|u_{j,l}^1\|^2+\|2u_{j,l}^1-u_{j,l}^0\|^2\big]\Big).
 \end{aligned}
\end{equation}
The other terms on the LHS of (\ref{th:eq_E_S}) can be treated in the same manner. This completes the proof.
\end{proof}

Next, we estimate the finite element discretization error $\mathcal{E}_l^n$. \begin{theorem}
\label{th:error} 
Let $\mathcal{E}_l^n=\mathbb{E}[u_j(t_n)-u_{j,l}^n]$, where $u_j(t_n)$ is the solution to equation \eqref{eq:rand} when $\omega=\omega_j$ and $t=t_n$ and $u_{j,l}^n$ is the result of scheme \eqref{eqn:ens_rand}. 
Assume that the initial errors $\|u_j(t_0)-u^0_{j, l}\|$, $\|u_j(t_1)-u^1_{j, l}\|$, $\|\nabla (u_j(t_0)-u^0_{j, l})\|$ and $\|\nabla (u_j(t_1)-u^1_{j, l})\|$ are all at least $\mathcal{O}(h^{m})$. 
Suppose conditions $\mathsf{(i)}$ and $\mathsf{(ii)}$, and the stability condition (\ref{con1}) hold, then there exists a generic constant
$C$ independent of $J_l$, $h_l$ and $\Delta t_l$ such that
\begin{equation}
\begin{aligned}
\frac{1}{4}\mathbb{E}\big[\|\mathcal{E}_l^{N_l}\|^2\big]
+\frac{1}{4}\mathbb{E}\big[\|2\mathcal{E}_l^{N_l} -\mathcal{E}_l^{N_l-1}\|^2\big]
&+\Big(\frac{\theta}{3}-\theta_+\Big)\Delta t_l\sum\limits_{n=0}^{N_l}\mathbb{E}\big[\|\nabla \mathcal{E}_l^n \|^2\big] \\
& \leq C(\Delta t_l^4+h_l^{2m}).
 \end{aligned}
 \label{eq:th_err}
\end{equation}
\end{theorem}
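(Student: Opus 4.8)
The plan is to carry out the classical energy-based finite element error analysis for parabolic problems, reusing verbatim the coercivity/stability mechanism already established in Theorem~\ref{th:stability_R}. First I would split the error through a projection $\Pi_l u_j(t_n)\in V_l^0$ (for instance the elliptic projection associated with the coercive form $(\overline{a}_l\nabla\cdot,\nabla\cdot)$, or simply the $V_l^0$ interpolant), writing
\begin{equation*}
u_j(t_n)-u_{j,l}^n = \big(u_j(t_n)-\Pi_l u_j(t_n)\big) - \big(u_{j,l}^n-\Pi_l u_j(t_n)\big) =: \eta^n - \phi_l^n,
\end{equation*}
where $\phi_l^n\in V_l^0$. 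Standard approximation theory for degree-$m$ elements bounds $\eta$ by $\mathcal{O}(h_l^{m+1})$ in $L^2(D)$ and $\mathcal{O}(h_l^{m})$ in the $H^1$-seminorm, and the assumed regularity of $u_j$ (the $\widetilde{L}^2(H^{m+1})$, $\widetilde{H}^1(H^{m+1})$ and $\widetilde{H}^2(L^2)$ classes) controls $\eta$, $\eta_t$ and the time derivatives that enter the truncation errors.

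Next I would derive the error equation for $\phi_l^n$. Evaluating the weak form of \eqref{eq:rand} at $t_{n+1}$ and subtracting the scheme \eqref{eqn:ens_rand}, two consistency errors appear from the time discretization: the BDF2 truncation error
\begin{equation*}
\tau_1^{n+1} = \frac{3u_j(t_{n+1})-4u_j(t_n)+u_j(t_{n-1})}{2\Delta t_l} - u_{j,t}(t_{n+1}),
\end{equation*}
and the extrapolation error in the explicitly treated fluctuation term, arising from $2u_j(t_n)-u_j(t_{n-1}) = u_j(t_{n+1}) + \mathcal{O}(\Delta t_l^2)$. Both are second order in $\Delta t_l$ because BDF2 and linear extrapolation are consistent to that order; Taylor expansion with integral remainders bounds their $L^2$/$H^{-1}$ norms by second and third time derivatives of $u_j$. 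The resulting identity expresses the discrete derivative $\tfrac{1}{2\Delta t_l}(3\phi_l^{n+1}-4\phi_l^n+\phi_l^{n-1})$ and the diffusion combination $\overline{a}_l\nabla\phi_l^{n+1}+(a_j-\overline{a}_l)\nabla(2\phi_l^n-\phi_l^{n-1})$ in terms of $\tau_1^{n+1}$, the projection errors $\eta$, and the extrapolation remainder.

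Then I would test the error equation with $v_l=\phi_l^{n+1}$ and repeat the algebra of the proof of Theorem~\ref{th:stability_R}: use the BDF2 $G$-stability identity to produce the telescoping quantities $\|\phi_l^{n+1}\|^2+\|2\phi_l^{n+1}-\phi_l^n\|^2$, invoke condition $\mathsf{(i)}$ for coercivity of the $\overline{a}_l$-term, and dominate the explicit fluctuation term $(a_j-\overline{a}_l)\nabla(2\phi_l^n-\phi_l^{n-1})$ via condition $\mathsf{(ii)}$ and Young's inequality exactly as before, so that $\theta>3\theta_+$ again yields the strictly positive coefficient $\tfrac{\theta}{3}-\theta_+$ in front of $\Delta t_l\sum_n\|\nabla\phi_l^n\|^2$. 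The right-hand side now carries the consistency terms $\Delta t_l\sum_n\big(\|\tau_1^{n+1}\|_{-1}^2+\|\nabla(\text{extrap.\ remainder})\|^2\big)=\mathcal{O}(\Delta t_l^4)$ together with the projection contributions, which by the approximation estimates are $\mathcal{O}(h_l^{2m})$ (the $L^2$ projection error entering through the discrete-derivative pairing contributes at worst $\mathcal{O}(h_l^{2m+2})$ and is absorbed). Summing from $n=1$ to $N_l-1$, incorporating the assumed $\mathcal{O}(h^m)$ initial errors for $\phi_l^0$ and $\phi_l^1$, and finally combining $\phi_l^n$ with $\eta^n$ by the triangle inequality---all performed after integrating over $\Omega$ as in Theorem~\ref{th:stability_R}, so every term sits under $\mathbb{E}[\cdot]$---gives \eqref{eq:th_err}.

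The main obstacle I anticipate is the explicit, non-sign-definite fluctuation term in the error equation: because $(a_j-\overline{a}_l)\nabla(2\phi_l^n-\phi_l^{n-1})$ couples three time levels, it must be absorbed into the coercive $\overline{a}_l$-term using $\mathsf{(ii)}$ with carefully weighted Young's inequalities, reproducing precisely the $\theta>3\theta_+$ threshold. A secondary technical point is the second-order consistency of the extrapolated diffusion: the factor $(a_j-\overline{a}_l)$ multiplies an $\mathcal{O}(\Delta t_l^2)$ remainder, so its bound must draw on $\mathsf{(ii)}$ to keep the constant independent of $j$ and $l$, and the requisite time regularity of $\nabla u_j$ must be taken from the $\widetilde{H}^1(H^{m+1}(D);0,T)$ hypothesis.
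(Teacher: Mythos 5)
Your proposal follows essentially the same route as the paper's proof: the same error splitting via the elliptic (Ritz) projection with respect to the averaged form $(\overline{a}_l\nabla\cdot,\nabla\cdot)$ --- which is the choice the paper makes precisely so that $(\overline{a}_l\nabla\rho_{j,l}^{n+1},\nabla\phi_{j,l}^{n+1})=0$, so take that option rather than your alternative interpolant, which would leave a nonvanishing term requiring an upper bound on $\overline{a}_l$ that the paper never assumes --- the same error equation containing the BDF2 truncation error $R_j^{n+1}$ and the extrapolation remainder $(a_j-\overline{a}_l)\nabla(u_j^{n+1}-2u_j^n+u_j^{n-1})$, and the same G-stability/Young absorption reproducing the $\theta>3\theta_+$ threshold and the coefficient $\frac{\theta}{3}-\theta_+$. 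The only step you leave implicit is the closing application of Jensen's inequality, $\|\mathbb{E}[e_j^n]\|^2\le\mathbb{E}[\|e_j^n\|^2]$, which converts your bound on $\mathbb{E}[\|e_j^n\|^2]$ into the stated estimate for $\mathcal{E}_l^n=\mathbb{E}[e_j^n]$.
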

\begin{proof} We first derive the error equation associated to \eqref{eqn:ens_rand}. 
Equation (\ref{eq:rand}) evaluated at $t_{n+1}$ and tested by $\forall\, v_l\in V_l^0$ yields
\begin{equation}
\begin{aligned}
\left( \frac{3u_j(t_{n+1})-4u_j(t_{n})+u_j(t_{n-1})}{2\Delta t_l},v_l \right)
&+(a_j \nabla u_j(t_{n+1}),\nabla v_l)\\
&=(f_j^{n+1},v_l)-(R_j^{n+1},v_l),
\end{aligned}
\label{Eq3}
\end{equation}
where $f_j^{n+1}= f_j(t_{n+1})$ and $R^{n+1}_j=u_{j,t}(t_{n+1})-\frac{3u_j(t_{n+1})-4u_j(t_n)+u_j(t_{n-1})}{2\Delta t_l}$. Denoted by $e_j^n:=u_j(t_n)-u_{j,l}^n$ the
approximation error at the time $t_n$.
Subtracting (\ref{eqn:ens_rand}) from (\ref{Eq3}) produces
\begin{equation}
\begin{aligned}
&\left(\frac{3e_{j}^{n+1}-4e_{j}^n+e_j^{n-1}}{2\Delta t_l},v_l \right)+ (\overline{a}_l\nabla e_{j}^{n+1},\nabla v_l)
+\left( (a_j-\overline{a}_l)\nabla (2e_{j}^n-e_j^{n-1}),\nabla v_l \right)\\
&\qquad+\left( (a_j-\overline{a}_l)\nabla( u_j^{n+1}-2u_j^n+u_j^{n-1}),\nabla v_l \right)+(R^{n+1}_j,v_l)=0. 
\end{aligned}
\label{eq:error}
\end{equation}
Let $P_l(u_j(t_n))$ be the Ritz projection of $u_j(t_n)$ onto $V_l^0$ satisfying 
$$\big(\overline{a}_l \big(\nabla (u_j(t_n)-P_l(u_j(t_n))), \nabla v_l \big)=0,\quad \forall\, v_l\in V_l^0.$$ 
The error can be decomposed as 
$$
e_j^n=\rho_{j,l}^n-\phi_{j,l}^n \text{ with } \rho_{j,l}^n= u_j(t_n)-P_l(u_j(t_n)) \text{ and } \phi_{j,l}^n= u_{j,l}^n-P_l(u_j(t_n)).
$$
By substituting this decomposition into \eqref{eq:error} and choosing $v_l=\phi_{j,l}^{n+1}$, we obtain 
\begin{equation}
\begin{aligned}
&\left( \frac{3\phi_{j,l}^{n+1}-4\phi_{j,l}^n+\phi_{j,l}^{n-1}}{2\Delta t_l}, \phi_{j,l}^{n+1} \right)
+(\overline{a}_l \nabla \phi_{j,l}^{n+1}, \nabla \phi_{j,l}^{n+1})
\\
&\qquad =-\left( (a_j-\overline{a}_l)\nabla (2\phi_{j,l}^n-\phi_{j,l}^{n-1}), \nabla \phi_{j,l}^{n+1} \right)  
+ \left( \frac{3\rho_{j,l}^{n+1}-4\rho_{j,l}^n+\rho_{j,l}^{n-1}}{2\Delta t_l}, \phi_{j,l}^{n+1} \right) \\
&\qquad+(\overline{a}_l\nabla \rho_{j,l}^{n+1}, \nabla \phi_{j,l}^{n+1})
+\left( (a_j-\overline{a}_l)\nabla (2\rho_{j,l}^n-\rho_{j,l}^{n-1}), \nabla \phi_{j,l}^{n+1} \right) \\
&\qquad+\left( (a_j-\overline{a}_l)\nabla(u_{j}^{n+1}-2u_{j}^n+u_j^{n-1}), \nabla \phi_{j,l}^{n+1} \right)
+(R_j^{n+1}, \phi_{j,l}^{n+1}). 
\end{aligned}
\label{eq:error2}
\end{equation}
After integrating over probability space, we have, for the LHS, 
\begin{equation}
\begin{aligned}
\textrm{LHS} &\geq \frac{1}{4\Delta t_l}\mathbb{E}\big[\|\phi_{j,l}^{n+1}\|^2 + \|2\phi_{j,l}^{n+1}-\phi_{j,l}^n\|^2\big] 
-\frac{1}{4\Delta t_l} \mathbb{E}\big[\|\phi_{j,l}^n\|^2+ \|2\phi_{j,l}^n-\phi_{j,l}^{n-1}\|^2 \big] \\
&+ \frac{1}{4\Delta t_l}\mathbb{E}\big[\|\phi_{j,l}^{n+1}-2\phi_{j,l}^n+\phi_{j,l}^{n-1}\|^2\big] +\theta\mathbb{E}\big[\|\nabla\phi_{j,l}^{n+1}\|^2\big].
\end{aligned}
\label{error3}
\end{equation}
We then bound the terms on the RHS of (\ref{eq:error2}) one by one. 
By applying the Cauchy-Schwarz and Young's inequalities, we have 
\begin{equation}
\begin{aligned}
&\mathbb{E}\Big[ \left|\left((a_j-\overline{a}_l )\nabla (2\phi_{j,l}^n-\phi_{j,l}^{n-1}),\nabla \phi_{j,l}^{n+1}\right)\right| \Big] \\
& \leq  \theta_+ \mathbb{E}\big[ |(2\nabla \phi_{j,l}^n,\nabla\phi_{j,l}^{n+1})| \big]
+ \theta_+ \mathbb{E}\big[ |(\nabla\phi_{j,l}^{n-1},\nabla\phi_{j,l}^{n+1})| \big] \\
& \leq \theta_+ \mathbb{E}\big[\|\nabla\phi_{j,l}^{n}\|^2\big]
+ \frac{\theta_+}{2} \mathbb{E}\big[\|\nabla\phi_{j,l}^{n-1}\|^2\big]
+\frac{3 \theta_+}{2} \mathbb{E}\big[\|\nabla\phi_{j,l}^{n+1}\|^2\big].
\end{aligned}
\label{errorfirst}
\end{equation}
We further use the Poinc\'{a}re inequality and have 
\begin{eqnarray}
\mathbb{E}\left[ \left|\left(\frac{3\rho_{j,l}^{n+1}-4\rho_{j}^{n}+\rho_{j,l}^{n-1}}{2\Delta t_l},\phi_{j,l}^{n+1}\right)\right| \right]
&\leq&
\frac{C}{4C_0\theta}\mathbb{E}\left[\left \|\frac{3\rho_{j,l}^{n+1}-4\rho_{j}^{n}+\rho_{j,l}^{n-1}}{2\Delta t_l} \right\|^2 \right]
+C_0\theta\mathbb{E}\big[\|\nabla\phi_{j,l}^{n+1}\|^2\big] \nonumber\\
&\leq&
\frac{C}{4C_0\theta}\mathbb{E}\left[ \left\| \frac{1}{\Delta t_l} \int_{t_{n-1}}^{t_{n+1}} \rho_{j, t} \, dt \right\|^2 \right]
+C_0\theta\mathbb{E}\big[\|\nabla\phi_{j,l}^{n+1}\|^2\big] \nonumber\\
&\leq& 
\frac{C}{4C_0\theta \Delta t_l}\mathbb{E}\left[\int_{t^{n-1}}^{t^{n+1}}\|\rho_{j,t}\|^2dt\right]
+C_0\theta\mathbb{E}\big[\|\nabla\phi_{j,l}^{n+1}\|^2\big],
\end{eqnarray}
where $C$ is the Poinc\'{a}re coefficient and $C_0$ is an arbitrary positive constant.  
The rest of terms can be bounded as follows. 
\begin{align}
& \mathbb{E}\left[ \left|(\overline{a}_l\nabla\rho_{j,l}^{n+1},\nabla\phi_{j,l}^{n+1})\right| \right]
= 0. \\
&\mathbb{E}\left[ \left|\left((a_j-\overline{a}_l )\nabla
(2\rho_{j,l}^n-\rho_j^{n-1}),\nabla \phi_{j,l}^{n+1}\right)\right| \right] \\
&\leq \theta_+ \mathbb{E}\big[ |(2\nabla \rho_{j,l}^n,\nabla\phi_{j,l}^{n+1})| \big]
+ \theta_+ \mathbb{E}\big[ |(\nabla\rho_j^{n-1},\nabla\phi_{j,l}^{n+1})| \big] \nonumber \\
&\leq\frac{1}{C_0}\frac{\theta_+^2}{\theta} \mathbb{E}\big[\|\nabla\rho_j^{n}\|^2\big]
+\frac{1}{4C_0}\frac{\theta_+^2}{\theta}\mathbb{E}\big[\|\nabla\rho_j^{n-1}\|^2\big]
+2C_0\theta \mathbb{E}\big[\|\nabla\phi_{j,l}^{n+1}\|^2\big]. \nonumber \\
%
&\mathbb{E}\left[ \left|((a_j-\overline{a})\nabla(u_{j}^{n+1}-2u_{j}^n+u_j^{n-1}),\nabla
\phi_{j,l}^{n+1})\right| \right]  \\
&\leq  \frac{1}{4C_0}\frac{\theta_+^2}{\theta} \mathbb{E}\big[\|\nabla(u_{j}^{n+1}-2u_{j}^{n}+u_{j}^{n-1})\|^2\big]
+C_0 \theta \mathbb{E}\big[\|\nabla\phi_{j,l}^{n+1}\|^2\big]\nonumber \\
&\leq \frac{C \Delta t_l^3}{4C_0}\frac{\theta_+^2}{\theta} \mathbb{E}\Big[\int_{t^{n-1}}^{t^{n+1}}\|\nabla u_{j,tt}\|^2dt\Big] 
+C_0 \theta \mathbb{E}\big[\|\nabla\phi_{j,l}^{n+1}\|^2\big], \nonumber
\end{align}
and
\begin{equation}
\mathbb{E}\left[ \left|(R_j^{n+1},\phi_{j,l}^{n+1})\right| \right] 
\leq
C_0\theta \mathbb{E}\big[\|\nabla\phi_{j,l}^{n+1}\|^2\big]+\frac{C\Delta t_l^3}{C_0\theta}\mathbb{E}\Big[\int_{t^{n-1}}^{t^{n+1}}\|u_{j,ttt}\|^2dt\Big].
\label{error6}
\end{equation}
Substituting (\ref{error3}) to (\ref{error6}) into
(\ref{eq:error2}), we get
\begin{equation}
\begin{aligned}
&\frac{1}{4\Delta t_l}\big(\mathbb{E}\big[\|\phi_{j,l}^{n+1}\|^2\big]
+\mathbb{E}\big[\|2\phi_{j,l}^{n+1}-\phi_{j,l}^n\|^2\big] \big)
-\frac{1}{4\Delta t_l}\big(\mathbb{E}\big[\|\phi_{j,l}^n\|^2\big]+\mathbb{E}\big[\|2\phi_{j,l}^n-\phi_{j,l}^{n-1}\|^2\big]\big) \nonumber\\
&+ \frac{1}{4\Delta t_l}\mathbb{E}\big[\|\phi_{j,l}^{n+1}-2\phi_{j,l}^n+\phi_{j,l}^{n-1}\|^2\big]
+\theta(1 -5C_0-\frac{3\theta_+}{2\theta}) \mathbb{E}\big[\|\nabla\phi_{j,l}^{n+1}\|^2\big] \\
&-\theta_+ \mathbb{E}\big[ \|\nabla \phi_{j,l}^n\|^2 \big]
 - \frac{\theta_+}{2} \mathbb{E}\big[ \|\nabla \phi_{j,l}^{n-1}\|^2 \big] \\
&\leq
\frac{C}{4C_0\theta \Delta t_l}\mathbb{E}\left[\int_{t^{n-1}}^{t^{n+1}}\|\rho_{j,t}\|^2dt\right]
+\frac{\theta_+^2}{C_0\theta}\mathbb{E}\big[\| \nabla\rho_{j}^{n}\|^2\big]
+\frac{\theta_+^2}{4 C_0 \theta}\mathbb{E}\big[\|\nabla\rho_{j,l}^{n-1}\|^2\big]
\nonumber\\
&
+\frac{C\Delta t_l^3}{4 C_0}\frac{\theta_+^2}{\theta}\mathbb{E}\Big[\int_{t^{n-1}}^{t^{n+1}}\|\nabla u_{j,tt}\|^2 dt\Big]
+\frac{C \Delta t_l^3}{C_0 \theta}\mathbb{E}\Big[\int_{t^{n-1}}^{t^{n+1}}\|u_{j,ttt}\|^2 dt\Big].
\end{aligned}
\label{errorlast}
\end{equation}
Now we split the term
$\theta\mathbb{E}\big[\|\nabla\phi_{j,l}^{n+1}\|^2\big]$, and choose $C_0 = \frac{1}{30}(1-\frac{3\theta_+}{\theta})$:
\begin{equation}
\begin{aligned}
&\frac{1}{4\Delta t_l}(\mathbb{E}\big[\|\phi_{j,l}^{n+1}\|^2\big]
+\mathbb{E}\big[\|2\phi_{j,l}^{n+1}-\phi_{j,l}^n\|^2\big])-\frac{1}{4\Delta
t}(\mathbb{E}\big[\|\phi_{j,l}^n\|^2\big]+\mathbb{E}\big[\|2\phi_{j,l}^n-\phi_{j,l}^{n-1}\|^2\big])
 \\
 &+ \frac{1}{4\Delta t_l}\mathbb{E}\big[\|\phi_{j,l}^{n+1}-2\phi_{j,l}^n+\phi_{j,l}^{n-1}\|^2\big]
 +\frac{\theta}{3} \Big( 1 - \frac{3\theta_+}{\theta} \Big) \mathbb{E}\big[ \|\nabla \phi_{j,l}^{n+1}\|^2 \big] \\
 &+\theta \Big(\frac{1}{3}-\frac{\theta_+}{\theta} \Big) \mathbb{E}\big[ \|\nabla \phi_{j,l}^n\|^2 \big]
 +\theta \Big(\frac{1}{6}-\frac{\theta_+}{2\theta} \Big) \mathbb{E}\big[ \|\nabla \phi_{j,l}^{n-1}\|^2 \big]\\
 &+  \frac{\theta}{2}\Big( \mathbb{E}\big[ \|\nabla \phi_{j,l}^{n+1}\|^2 \big] - \mathbb{E}\big[ \|\nabla \phi_{j,l}^{n}\|^2 \big] \Big)
 +  \frac{\theta}{6}\Big( \mathbb{E}\big[ \|\nabla \phi_{j,l}^{n}\|^2 \big] - \mathbb{E}\big[ \|\nabla \phi_{j,l}^{n-1}\|^2 \big] \Big)  \\
&\leq
\frac{C}{(\theta-3\theta_+)} \bigg\{
\frac{1}{\Delta t_l}\mathbb{E}\left[\int_{t^{n-1}}^{t^{n+1}}\|\rho_{j,t}\|^2dt\right]
+ \theta_+^2 \mathbb{E}\big[\| \nabla\rho_{j}^{n}\|^2\big]
+ \theta_+^2 \mathbb{E}\big[\|\nabla\rho_{j,l}^{n-1}\|^2\big]
\\
&
+{C\Delta t_l^3 \theta_+^2}\mathbb{E}\Big[\int_{t^{n-1}}^{t^{n+1}}\|\nabla u_{j,tt}\|^2 dt\Big]
+{\Delta t_l^3}\mathbb{E}\Big[\int_{t^{n-1}}^{t^{n+1}}\|u_{j,ttt}\|^2 dt\Big] 
\bigg\}.
\end{aligned}
\label{error}
\end{equation}
Summing (\ref{error}) from $n=1$ to $N_l-1$, multiplying both sides by $\Delta t_l$, and dropping several positive terms, we have 
\begin{equation}
\begin{aligned}
&\frac{1}{4}\mathbb{E}\big[\|\phi_{j,l}^{N_l}\|^2\big]+\frac{1}{4}\mathbb{E}\big[\|2\phi_{j,l}^{N_l}-\phi_{j,l}^{N_l-1}\|^2\big]
+ \Big(\frac{\theta}{3}-\theta_+\Big)\Delta t_l\sum\limits_{n=0}^{N_l}\mathbb{E}\big[\|\nabla\phi_{j,l}^{n}\|^2\big]
\\
&\leq 
\frac{C}{(\theta-3\theta_+)}
\sum\limits_{n=1}^{N_l-1} \bigg\{
\mathbb{E}\left[\int_{t^{n-1}}^{t^{n+1}}\|\rho_{j,t}\|^2dt\right]
+ \Delta t_l \theta_+^2 \mathbb{E}\big[\|\nabla\rho_{j}^{n}\|^2\big]
+ \Delta t_l \theta_+^2 \mathbb{E}\big[\|\nabla\rho_{j,l}^{n-1}\|^2\big]
\\
&
+ \Delta t_l^4 \theta_+^2 \mathbb{E}\Big[\int_{t^{n-1}}^{t^{n+1}}\|\nabla u_{j,tt}\|^2 dt\Big]
+ \Delta t_l^4 \mathbb{E}\Big[\int_{t^{n-1}}^{t^{n+1}}\|u_{j,ttt}\|^2 dt\Big] \bigg\}\\
&+\frac{1}{4}\mathbb{E}\big[\|\phi_{j,l}^{1}\|^2\big]+\frac{1}{4}\mathbb{E}\big[\|2\phi_{j,l}^{1}-\phi_{j,l}^{0}\|^2\big]
+\frac{\theta}{2}\Delta t_l\mathbb{E}\big[\|\nabla\phi_{j,l}^{1}\|^2\big]
+ \frac{\theta}{6} \Delta t_l\mathbb{E}\big[\|\nabla\phi_{j,l}^{0}\|^2\big].
\end{aligned}
\label{error02}
\end{equation}
By the regularity assumption and standard finite element estimates
of Ritz projection error (see, e.g., Lemma 13.1 in \cite{thomee2006galerkin} 
), namely, for any $u_j^n\in H^{m+1}(D)\cap H_0^1(D)$, 
\begin{equation}
\| \rho_{j,l}^n \|^2\leq Ch_l^{2m+2}\|u_j(t_n)\|_{l+1}^2 
\quad \text{ and }\quad
\|\nabla \rho_{j,l}^n \|^2\leq Ch_l^{2m}\|u_j(t_n)\|_{l+1}^2,
\end{equation}
and use the assumption that $\|e^0_{j, l}\|$, $\|e^1_{j, l}\|$, $\|\nabla e^0_{j, l}\|$, and $\|\nabla e^1_{j, l}\|$ are at least $\mathcal{O}(h^{m})$, we have
\begin{equation}
\begin{aligned}
&\frac{1}{4}\mathbb{E}\big[\|\phi_{j,l}^{N_l}\|^2\big]+\frac{1}{4}\mathbb{E}\big[\|2\phi_{j,l}^{N_l}-\phi_{j,l}^{N_l-1}\|^2\big]
+ \Big(\frac{\theta}{3}-\theta_+\Big)\Delta t_l\sum\limits_{n=0}^{N_l}\mathbb{E}\big[\|\nabla\phi_{j,l}^{n}\|^2\big]
\\
&\leq 
\frac{C}{(\theta-3\theta_+)} \Big\{  h_l^{2m+2}
+\theta_+^2 h_l^{2m}
+ \Delta t_l^4 \theta_+^2 \mathbb{E}\Big[\int_{0}^{T}\|\nabla u_{j,tt}\|^2 dt\Big]\\
&+  \Delta t_l^4 \mathbb{E}\Big[\int_{0}^{T}\| u_{j,ttt}\|^2 dt\Big]  \Big\}
 + h_l^{2m} + \theta \Delta t_l h_l^{2m},
\end{aligned}
\label{error04}
\end{equation}
where $C$ is a generic constant independent of the time step
$\Delta t_l$ and mesh size $h_l$. By the
triangle inequality, we have 
\begin{equation*}
\begin{aligned}
&\frac{1}{4}\mathbb{E}\big[\|u_j(t_{N_l})-u_{j,l}^{N_l}\|^2\big]
+\frac{1}{4}\mathbb{E}\big[\|2\big(u_j(t_{N_l})-u_{j,l}^{N_l}\big) - \big(u_j(t_{N_l-1})-u_{j,l}^{N_l-1}\big)\|^2\big] \\
&+\Big(\frac{\theta}{3}-\theta_+\Big)\Delta t_l\sum\limits_{n=0}^{N_l}\mathbb{E}\big[\|\nabla \big(u_j(t_{n}-u_{j,l}^{n}\big) \|^2\big] 
\leq C(\Delta t_l^4+h_l^{2m}).
 \end{aligned}
\end{equation*}
Applying Jensen's inequality to terms on the LHS leads to the error estimate (\ref{eq:th_err}).
This completes the proof.
\end{proof}

The combination of the error contributions from the Monte Carlo sampling and finite element
approximation leads to the following estimate for the $l$-th level Monte Carlo ensemble approximation. 
\begin{theorem}
\label{th:Psi}
Let $u(t_n)$ be the solution to equation \eqref{eq:rand} and $\Psi_{J_l}^n = \frac{1}{J_l}\sum_{j=1}^{J_l}u_{j, l}^n$. 
Suppose conditions $\mathsf{(i)}$ and $\mathsf{(ii)}$ hold, and suppose the stability condition (\ref{con1}) is satisfied, then
\begin{equation}
\begin{aligned}
&\frac{1}{4}\mathbb{E}\big[\|\mathbb{E}[u(t_{N_l})]-\Psi_{J_l}^{N_l}\|^2\big]
+\frac{1}{4}\mathbb{E}\big[\|2(\mathbb{E}[u(t_{N_l})]-\Psi_{J_l}^{N_l})
-(\mathbb{E}[u(t_{N_l-1})]-\Psi_{J_l}^{N_l-1})\|^2\big]\\
&+\Big( \frac{\theta}{3}-\theta_+ \Big)\Delta t_l
\sum\limits_{n=1}^{N_l}\mathbb{E}[\|\nabla(\mathbb{E}[u(t_n)]-\Psi_{J_l}^{n})\|^2]\\
& \leq \frac{C}{J_l}\Big(\Delta t_l\sum\limits_{n=1}^{N_l}\mathbb{E}\big[\|f_j^{n}\|_{-1}^2]+\Delta t_l
\mathbb{E}\big[\|\nabla u_{j,l}^1\|^2+\|\nabla
 u_{j,l}^0\|^2\big]\\
 &+\mathbb{E}\big[\|u_{j,l}^1\|^2+\|2u_{j,l}^1-u_{j,l}^0\|^2\big]\Big)+C(\Delta t_l^4+h_l^{2m}),
 \end{aligned}
 \label{eq:th_error}
\end{equation}
 where $C$ is a positive constant independent of $J_l,\Delta t_l$ and $h_l$.\\
\begin{proof} Consider the first term on the LHS of
(\ref{eq:th_error}). By the triangle and Young's inequality, we
get
$$
\mathbb{E}\big[\|\mathbb{E}[u(t_{N_l})]-\Psi_{J_l}^{N_l}\|^2\big]
\leq
2\big(\mathbb{E}\big[\|\mathbb{E}[u_j(t_{N_l})]-\mathbb{E}[u_{j,l}^{N_l}]\|^2\big]+\mathbb{E}\big[\|\mathbb{E}[u_{j,l}^{N_l}]-\Psi_{J_l}^{N_l}\|^2\big]\big).
$$
Then the conclusion follows from Theorems \ref{th:E_S}-\ref{th:error}. The other terms on the LHS of
(\ref{eq:th_error}) can be estimated in the same manner.
\end{proof}
\end{theorem}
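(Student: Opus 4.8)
The plan is to exploit the additive error splitting already recorded in Subsection \ref{sec:analysis_1l}, namely
$$\mathbb{E}[u(t_n)]-\Psi_{J_l}^n = \mathcal{E}_l^n + \mathcal{E}_S^n,$$
where $\mathcal{E}_l^n=\mathbb{E}[u_j(t_n)-u_{j,l}^n]$ is the finite element discretization error and $\mathcal{E}_S^n=\mathbb{E}[u_{j,l}^n]-\Psi_{J_l}^n$ is the statistical sampling error. Since the right-hand side of \eqref{eq:th_error} is exactly the sum of the bound on $\mathcal{E}_S^n$ from Theorem \ref{th:E_S} and the bound on $\mathcal{E}_l^n$ from Theorem \ref{th:error}, the whole argument reduces to a triangle-inequality decomposition of each of the three quantities on the left-hand side, with no new estimates required.

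Concretely, I would treat the three left-hand quantities — the $L^2$ norm of the total error at the final step, the combined norm $\|2(\cdot)^{N_l}-(\cdot)^{N_l-1}\|$, and the weighted gradient sum — one at a time. For each I apply the triangle inequality followed by Young's inequality in the form $\|a+b\|^2\le 2\|a\|^2+2\|b\|^2$, exactly as the excerpt already does for the first term, to separate the $\mathcal{E}_l$ contribution from the $\mathcal{E}_S$ contribution. Invoking Theorem \ref{th:error} then controls every $\mathcal{E}_l$ piece by $C(\Delta t_l^4+h_l^{2m})$, while Theorem \ref{th:E_S} controls every $\mathcal{E}_S$ piece by the $\tfrac{1}{J_l}$-weighted data terms; adding the two and absorbing the harmless factor $2$ into the generic constant $C$ yields \eqref{eq:th_error}.

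The one point requiring genuine care is the gradient sum, both its summation range and its coefficient $(\tfrac{\theta}{3}-\theta_+)$. After the split $\|\nabla(\mathcal{E}_l^n+\mathcal{E}_S^n)\|^2\le 2\|\nabla\mathcal{E}_l^n\|^2+2\|\nabla\mathcal{E}_S^n\|^2$, the weighted sum of the $\mathcal{E}_l$-part is bounded directly by Theorem \ref{th:error} and that of the $\mathcal{E}_S$-part by Theorem \ref{th:E_S}, the common coefficient $(\tfrac{\theta}{3}-\theta_+)$ — positive by \eqref{con1} — matching the left-hand sides there. The statistical estimate of Theorem \ref{th:E_S} supplies the gradient sum only over $n=1,\dots,N_l-1$, whereas \eqref{eq:th_error} runs to $n=N_l$; the endpoint term $\mathbb{E}[\|\nabla\mathcal{E}_S^{N_l}\|^2]$ is recovered from the pointwise bound $\mathbb{E}[\|\nabla\mathcal{E}_S^n\|^2]\le \tfrac{1}{J_l}\mathbb{E}[\|\nabla u_{j,l}^n\|^2]$ derived inside that proof, together with the separate endpoint control $\tfrac{\theta}{2}\Delta t_l\,\mathbb{E}[\|\nabla u_{j,l}^{N_l}\|^2]$ furnished by Theorem \ref{th:stability_R}. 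Beyond this routine bookkeeping I foresee no real obstacle: the analytic substance of the estimate already resides in Theorems \ref{th:E_S} and \ref{th:error}, and this final step is merely their superposition.
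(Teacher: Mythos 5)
Your proposal is correct and follows essentially the same route as the paper: split the error as $\mathbb{E}[u(t_n)]-\Psi_{J_l}^n=\mathcal{E}_l^n+\mathcal{E}_S^n$, apply the triangle and Young inequalities termwise, and conclude from Theorems \ref{th:E_S} and \ref{th:error}. Your explicit recovery of the endpoint gradient term $\mathbb{E}\big[\|\nabla\mathcal{E}_S^{N_l}\|^2\big]$ via the pointwise bound $\mathbb{E}\big[\|\nabla\mathcal{E}_S^n\|^2\big]\le \tfrac{1}{J_l}\mathbb{E}\big[\|\nabla u_{j,l}^n\|^2\big]$ and the stability estimate of Theorem \ref{th:stability_R} merely makes precise a bookkeeping detail that the paper's proof leaves implicit in ``the other terms \ldots can be estimated in the same manner.''
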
\par

\subsection{Ensemble-based multi-level Monte Carlo finite element method}\label{sec:analysis_ml}

Now, we derive the error estimate for the EMLMC method. 
\begin{theorem}
\label{th:E_L}
Suppose conditions $\mathsf{(i)}$ and $\mathsf{(ii)}$ and the stability condition (\ref{con1}) hold, then
the EMLMC approximation error satisfies 
\begin{equation}
\begin{aligned}
&\frac{1}{4}\mathbb{E}\left[\left\|\mathbb{E}\big[u(t_{N_L})\big]- \Psi\big[u_L(t_{N_L})\big]\right\|^2\right]
+\frac{1}{4}\mathbb{E}\Big[\big\|\mathbb{E}\big[u^{N_L}]-\Psi[u_L(t_{N_L})\big] - \big(\mathbb{E}\big[ u^{N_{L}-1} \big] \\ 
&-\Psi \big[ u_L(t_{N_{L}-1}) \big] \big)\big\|^2\Big]
+\Big(\frac{\theta}{3}-{\theta_+}\Big)\Delta t_L\sum\limits_{n=1}^{N_L}\mathbb{E}\Big[ \big\| \nabla \mathbb{E} \big[ u(t_{n}) \big]
- \nabla \Psi \big[u_L(t_{n}) \big] \big\|^2 \Big]\\
&\leq C\Big(h_L^{2m}+\Delta t_L^4+\sum\limits_{l=1}^L\frac{1}{J_l}(h_l^{2m}+\Delta t_l^4)\Big)+\frac{C}{J_0}\Big(\Delta t_0\sum\limits_{n=1}^{N_0}\mathbb{E}\big[\|f_j^{n}\|_{-1}^2]\\
 &+\Delta t_0
\mathbb{E}\big[\|\nabla u_{j,0}^1\|^2+\|\nabla
 u_{j,0}^0\|^2\big]+\mathbb{E}\big[\|u_{j,0}^1\|^2+\|2u_{j,0}^1-u_{j,0}^0\|^2\big]\Big),
 \end{aligned}
\label{th:eq_E_L}
\end{equation}
\end{theorem}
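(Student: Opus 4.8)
The plan is to split the total EMLMC error into a deterministic discretization part on the finest level and a multilevel statistical part, and then to control each piece using the single-level results already established in Theorems \ref{th:error} and \ref{th:E_S}. Using $\mathbb{E}[u(t_n)]=\mathbb{E}[u_j(t_n)]$ together with the fact that $\mathbb{E}[u_L(t_n)]=\mathbb{E}[u_{j,L}^n]$, I would first write
\[
\mathbb{E}[u(t_n)] - \Psi[u_L(t_n)] = \big(\mathbb{E}[u_j(t_n)] - \mathbb{E}[u_{j,L}^n]\big) + \big(\mathbb{E}[u_L(t_n)] - \Psi[u_L(t_n)]\big).
\]
The first bracket is precisely the finest-level discretization error $\mathcal{E}_L^n$, so Theorem \ref{th:error} applied at level $L$ immediately yields the $C(h_L^{2m}+\Delta t_L^4)$ contribution in \eqref{th:eq_E_L}.

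For the statistical part I would expand it along the multilevel hierarchy underlying \eqref{E_L},
\[
\mathbb{E}[u_L(t_n)] - \Psi[u_L(t_n)] = \big(\mathbb{E}[u_0] - \Psi_{J_0}[u_0]\big) + \sum_{l=1}^L \big(\mathbb{E}[u_l - u_{l-1}] - \Psi_{J_l}[u_l - u_{l-1}]\big),
\]
with the spatial and temporal arguments suppressed. Because the sample sets drawn at distinct levels are mutually independent, the cross terms vanish in expectation and the mean-square statistical error decomposes as a clean sum over levels, avoiding any spurious factor of $L$. The base-level term is exactly $\mathcal{E}_S^n$ at level $0$, which Theorem \ref{th:E_S} bounds to produce the $\tfrac{C}{J_0}(\cdots)$ block in \eqref{th:eq_E_L}.

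For each correction level $l\ge 1$, I would repeat the variance computation from the proof of Theorem \ref{th:E_S} verbatim, with $u_{j,l}^n$ replaced by the difference $u_{j,l}^n-u_{j,l-1}^n$, to obtain
\[
\Delta t_l \sum_{n=1}^{N_l}\mathbb{E}\big[\|\nabla(\mathbb{E}[u_l-u_{l-1}]-\Psi_{J_l}[u_l-u_{l-1}])\|^2\big] \le \frac{1}{J_l}\,\Delta t_l \sum_{n=1}^{N_l}\mathbb{E}\big[\|\nabla(u_{j,l}^n-u_{j,l-1}^n)\|^2\big].
\]
The decisive MLMC step is to bound the right-hand second moment not by the stability estimate but by inserting the exact solution, $u_{j,l}^n-u_{j,l-1}^n=(u_{j,l}^n-u_j(t_n))+(u_j(t_n)-u_{j,l-1}^n)$, and applying Theorem \ref{th:error} to each summand. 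Invoking the mesh refinement \eqref{eq:meshsize} and the analogous scaling of $\Delta t_l$, the coarser level $l-1$ error is of the same order as the level $l$ error up to a constant, so each correction level contributes $\tfrac{C}{J_l}(h_l^{2m}+\Delta t_l^4)$; summing over $l$ gives the $\sum_{l=1}^L \tfrac{1}{J_l}(h_l^{2m}+\Delta t_l^4)$ term.

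Finally I would assemble the discretization, base, and correction contributions, treating the remaining left-hand-side norms (the $\tfrac14\mathbb{E}[\|2(\cdots)-(\cdots)\|^2]$ quantity) by the same triangle and Young splitting used in Theorem \ref{th:Psi}, to arrive at \eqref{th:eq_E_L}. I expect the main obstacle to be the correction-level variance bound: one must verify that the difference $u_{j,l}-u_{j,l-1}$ genuinely inherits the full discretization rate rather than the $\mathcal{O}(1)$ stability bound, since it is precisely this decay of the finer-level variances that makes the multilevel estimator cheaper than single-level Monte Carlo. Care is also needed to carry the estimates in the time-summed $H_0^1$ norm throughout, so that Theorem \ref{th:error} can be applied without loss.
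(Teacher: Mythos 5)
Your proposal follows essentially the same route as the paper's proof: the identical splitting into the finest-level discretization error plus the telescoped multilevel statistical error (with $u_{-1}=0$), the same per-level Monte Carlo variance bound $\frac{1}{J_l}\mathbb{E}\big[\|u_{j,l}^n-u_{j,l-1}^n\|^2\big]$, and---crucially---the same insertion of the exact solution into the level difference so that Theorem \ref{th:error} applied twice, combined with the refinement relation \eqref{eq:meshsize}, yields the $\frac{C}{J_l}(h_l^{2m}+\Delta t_l^4)$ correction terms, with the base level handled by the single-level statistical estimate and the remaining left-hand-side norms treated analogously. The only cosmetic difference is that you eliminate the cross-level terms via independence of the per-level sample sets, whereas the paper simply applies the triangle and Young inequalities and absorbs the resulting ($L$-dependent) factor into the generic constant $C$.
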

 where $C>0$ is a constant independent of $J_l,\Delta t_l$ and $h_l$.\\
\begin{proof}
We only analyze the first term on the LHS because the other terms can be treated in the same manner. 
First, we introduce $u_{-1}(t)=0$. 
\begin{equation}
\begin{aligned}
&\mathbb{E}\left[\big\|\mathbb{E}[u(t_{N_L})]-\Psi[u_L(t_{N_L})] \big\|^2\right]\\
&=\mathbb{E}\Big[\big\|\mathbb{E}[u(t_{N_L})]-\mathbb{E}[u_L(t_{N_L})]+\mathbb{E}[u_L(t_{N_L})]
-\sum\limits_{l=0}^L\Psi_{J_l}[u_l(t_{N_L})-u_{l-1}(t_{N_L})]\big\|^2\Big]\\
&\leq C
\bigg(\mathbb{E}\Big[\big\|\mathbb{E}[u(t_{N_L})]-\mathbb{E}[u_L(t_{N_L})]\big\|^2\Big]
+\sum\limits_{l=0}^L\mathbb{E}\Big[\big\|\Big(\mathbb{E}[u_l(t_{N_L})-u_{l-1}(t_{N_L})] \\
&\qquad-\Psi_{J_l}[u_l(t_{N_L})-u_{l-1}(t_{N_L})]\Big)\big\|^2\Big]\bigg).
\end{aligned}
\label{eq:s1}
\end{equation}

By Jensen's inequality and Theorem \ref{th:error}, we get
\begin{equation}
\begin{aligned}
\mathbb{E}\Big[\big\|\mathbb{E}[u(t_{N_L})]-\mathbb{E}[u_L(t_{N_L})]\big\|^2\Big] 
&\leq \mathbb{E}\Big[\big\|u(t_{N_L})-u_L(t_{N_L})\big\|^2\Big] \\
&\leq C(\Delta t_L^4+h_L^{2m}).
\end{aligned}
\label{eq:exp}
\end{equation}

By Theorems \ref{th:E_S}-\ref{th:error} and the triangle inequality, we have 
\begin{equation}
\begin{aligned}
&\mathbb{E}\Big[\big\|\mathbb{E}[u_l(t_{N_L})-u_{l-1}(t_{N_L})]-\Psi_{J_l}[u_l(t_{N_L})-u_{l-1}(t_{N_L})]\big\|^2\Big]\\
&=\mathbb{E}\Big[\big\|(\mathbb{E}-\Psi_{J_l})[u_l(t_{N_L})-u_{l-1}(t_{N_L})]\big\|^2\Big] \\
&\leq \frac{1}{J_l}\mathbb{E}\big[\|u_l(t_{N_L})-u_{l-1}(t_{N_L})\|^2\big]\\
&\leq \frac{2}{J_l}\Big(\mathbb{E}\big[\|u(t_{N_L})-u_l(t_{N_L})\|^2\big]
+\mathbb{E}\big[\|u(t_{N_L})-u_{l-1}(t_{N_L})\|^2\big]\Big)\\
&\leq \frac{C}{J_l}\big(\Delta t_l^4+h_l^{2m}+\Delta t_{l-1}^4+h_{l-1}^{2m}\big)
\leq \frac{C}{J_l}\big(\Delta t_l^4+h_l^{2m}\big).
\end{aligned}
\label{J}
\end{equation}

Meanwhile, based on Theorem \ref{th:Psi}, we have
\begin{equation}
\begin{aligned}
&\mathbb{E}\big[\|\mathbb{E}[u_0(t_{N_L})]-\Psi_{J_0}[u_0(t_{N_L})]\|^2\big] \\
&\leq \frac{C}{J_0}\Big(\Delta t_0\sum\limits_{n=1}^{N_0}\mathbb{E}\big[\|f_j^{n+1}\|_{-1}^2\big]
+\Delta t_0\mathbb{E}\big[\|\nabla u_{j,0}^1\|+\|\nabla u_{j,0}^0\|^2\big] \\
&+\mathbb{E}\big[\|u_{j,0}^1\|^2+\|2u_{j,0}^1-u_{j,0}^0\|^2\big]\Big).
\end{aligned}
\label{eq:J0}
\end{equation}

Plugging \eqref{eq:exp}, \eqref{J} and \eqref{eq:J0} into \eqref{eq:s1}, we have
\begin{equation}
\begin{aligned}
&\frac{1}{4}\mathbb{E}\big[\|\mathbb{E}[u(t_{N_L})]-\Psi[u_L(t_{N_L})]\|^2\big]
\leq C\Big(\Delta t_L^4 + h_L^{2m}+\sum\limits_{l=1}^L\frac{1}{J_l} (\Delta t_l^4 + h_l^{2m})\Big)\\
&+\frac{C}{J_0}\Big(\Delta t_0 \sum\limits_{n=1}^{N_0}\mathbb{E}\big[\|f_j^{n}\|_{-1}^2] +\Delta t_0
\mathbb{E}\big[\|\nabla u_{j,0}^1\|^2+\|\nabla u_{j,0}^0\|^2\big]\\
&+\mathbb{E}\big[\|u_{j,0}^1\|^2+\|2u_{j,0}^1-u_{j,0}^0\|^2\big]\Big).
 \end{aligned}
\end{equation}
The other terms on the LHS of (\ref{th:eq_E_L}) can be treated in the same manner. This completes the proof.
\end{proof}

Since, in general, the finite element simulation cost increases as the mesh is refined, we can balance the time step size $\Delta t_l$, mesh size $h_l$ and sampling size $J_l$ in the preceding error estimation for achieving an optimal rate of convergence while keeping the computational cost at minimum. 
\begin{corollary}
By taking 
$$\Delta t_l= \mathcal{O}(\sqrt{h_l^m}) \quad\text{ and }\quad J_l=  l^{1+\varepsilon}2^{2m(L-l)} J_L$$ 
for an arbitrarily small positive constant $\epsilon$ and $l=0, 1, \cdots, L$, the EMLMC approximation satisfies 
\begin{equation}
\begin{aligned}
&\frac{1}{4}\mathbb{E}\left[\left\|\mathbb{E}\big[u(t_{N_L})\big]- \Psi\big[u_L(t_{N_L})\big]\right\|^2\right]
+\frac{1}{4}\mathbb{E}\Big[\big\|\mathbb{E}\big[u^{N_L}]-\Psi[u_L(t_{N_L})\big] - \big(\mathbb{E}\big[ u^{N_{L}-1} \big] \\ 
&-\Psi \big[ u_L(t_{N_{L}-1}) \big] \big)\big\|^2\Big]
+\Big(\frac{\theta}{3}- \theta_+ \Big)\Delta t_L\sum\limits_{n=1}^{N_L}\mathbb{E}\Big[ \big\| \nabla \mathbb{E} \big[ u(t_{n}) \big]
- \nabla \Psi \big[u_L(t_{n}) \big] \big\|^2 \Big]\\
&\leq Ch_L^{2m},
\end{aligned}
\end{equation}
where $C>0$ are constants independent of $J_l,\Delta t_l$ and $h_l$.
\label{eq:error_analysis}
\end{corollary}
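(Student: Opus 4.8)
The plan is to substitute the prescribed choices of $\Delta t_l$ and $J_l$ directly into the right-hand side of \eqref{th:eq_E_L} and check that every term is $\mathcal{O}(h_L^{2m})$; since the remaining terms on the left-hand side are controlled exactly as stated in Theorem~\ref{th:E_L}, this suffices. I would split the right-hand side into three structurally distinct pieces: the single-realization bias $h_L^{2m}+\Delta t_L^4$, the telescoping sum $\sum_{l=1}^L \tfrac{1}{J_l}(h_l^{2m}+\Delta t_l^4)$, and the coarsest-level term carrying the explicit data and forcing dependence.

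First I would record the elementary consequence $\Delta t_l^4=\mathcal{O}(h_l^{2m})$ of the choice $\Delta t_l=\mathcal{O}(\sqrt{h_l^m})$, valid for every $l$; in particular $\Delta t_L^4=\mathcal{O}(h_L^{2m})$, so the bias pair is already of the target order. For the multilevel sum, each summand is then $\mathcal{O}(h_l^{2m}/J_l)$. Inserting $h_l=2^{-l}h_0$ and $J_l=l^{1+\varepsilon}2^{2m(L-l)}J_L$, and using $h_0^{2m}=2^{2mL}h_L^{2m}$, the geometric factors $2^{-2ml}$ and $2^{2m(L-l)}$ cancel and each term collapses to a constant multiple of $\tfrac{h_L^{2m}}{l^{1+\varepsilon}J_L}$. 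Summing over $l$ leaves the $p$-series $\sum_{l\ge 1} l^{-(1+\varepsilon)}$, whose partial sums are bounded by a constant $C_\varepsilon$ independent of $L$ precisely because $\varepsilon>0$; hence this piece is $\le C_\varepsilon h_L^{2m}/J_L\le C h_L^{2m}$.

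The only term requiring genuine care is the coarsest-level contribution $\tfrac{C}{J_0}(\cdots)$. Here I would first argue that the bracketed quantity is bounded by a constant independent of $L$: the forcing part $\Delta t_0\sum_{n=1}^{N_0}\mathbb{E}[\|f_j^n\|_{-1}^2]$ is a quadrature approximation of $\int_0^T\mathbb{E}[\|f_j\|_{-1}^2]\,dt$, finite by the assumption $f_j\in\widetilde{L}^2(H^{-1}(D);0,T)$, while the remaining terms depend only on the fixed initial data $u_{j,0}^0,u_{j,0}^1$ and are bounded by the regularity of the initial condition. It then remains to show $J_0^{-1}=\mathcal{O}(h_L^{2m})$. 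Reading the formula at $l=0$ with the factor $l^{1+\varepsilon}$ set to $1$ (the only consistent choice, since $0^{1+\varepsilon}$ would force zero samples), one has $J_0\sim 2^{2mL}J_L$; combining this with $2^{2mL}=(h_0/h_L)^{2m}$ gives $J_0^{-1}=\mathcal{O}(h_L^{2m}/(h_0^{2m}J_L))=\mathcal{O}(h_L^{2m})$, so this term is $\mathcal{O}(h_L^{2m})$ as well.

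Collecting the three estimates yields $\le C h_L^{2m}$, proving the corollary. The conceptual heart of the argument — and the reason for the particular sampling allocation — is the standard multilevel balancing: the variance of the level correction $u_l-u_{l-1}$ decays like $h_l^{2m}$ (this is exactly what \eqref{J} encodes), so the choice $J_l\propto 2^{2m(L-l)}$ equalizes each level's statistical contribution, and the extra $l^{1+\varepsilon}$ makes the resulting series summable uniformly in $L$. The step I would watch most carefully is the $l=0$ term, both because of the degenerate $0^{1+\varepsilon}$ in the stated formula and because its bracket is the one place where the explicit forcing and initial-data quantities must be shown to be independent of $L$.
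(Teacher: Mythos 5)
Your proposal is correct and follows exactly the route the paper intends: the corollary is stated without proof as a direct consequence of Theorem~\ref{th:E_L}, and your substitution of $\Delta t_l^4=\mathcal{O}(h_l^{2m})$, the cancellation $h_l^{2m}/J_l = h_L^{2m}/(l^{1+\varepsilon}J_L)$ via $h_l=2^{-l}h_0$, the summability of $\sum_l l^{-(1+\varepsilon)}$ uniformly in $L$, and the bound $J_0^{-1}=\mathcal{O}(h_L^{2m})$ for the coarsest-level data term reproduce precisely that implicit argument. Your observation about the degenerate factor $0^{1+\varepsilon}$ at $l=0$ is a fair catch of a sloppiness in the paper's stated sampling formula, and your resolution (reading the factor as $1$ there) is the natural and consistent one.
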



\section{Numerical Experiments}\label{sec:num}
In this section, we apply the proposed ensemble-based multilevel Monte Carlo algorithm to two numerical tests 
for solving the random parabolic equation \eqref{eq:rand}. The goal is two-fold: to illustrate the theoretical results in Test 1; and to show the efficiency of the proposed method in Test 2. 

\subsection{Test 1}
In this experiment, we check the convergence rate of the EMLMC method numerically by considering a problem with an {\em a priori} known exact solution. 
The diffusion coefficient and the exact solution are selected as follows.
\begin{equation*}
\begin{aligned}
a(\omega, \bx)&= 8+(1+\omega)\sin(xy), \\
u(\omega, \bx, t)&= (1+\omega)[\sin(2\pi x)\sin(2\pi y)+\sin(4\pi t)],
\end{aligned}
\end{equation*}
where $\omega$ obeys a uniform distribution on $[-\sqrt{3}, \sqrt{3}]$, $t\in [0, 1]$, and $(x, y)\in [0, 1]^2$.
The initial condition, inhomogeneous Dirichlet boundary condition and source term are chosen to match the prescribed exact solution.
Therefore, the expectation of the solution is 
\begin{equation*}
\mathbb{E}[u]= \sin(2\pi x)\sin(2\pi y)+\sin(4\pi t).
\end{equation*}

For the spatial discretization, we use quadratic finite elements on uniform triangulations, that is, $m=2$. 
To verify the analysis given in \eqref{eq:error_analysis}, we fix $L$ and choose the mesh size $h_l =  \sqrt{2} \cdot 2^{-2-l}$, time step size $\Delta t_l =  2^{-3-l}$, and number of samples $J_l= 2^{4(L-l)+1}$ at the $l$-th level, where $l= 0, \ldots, L$ in the EMLMC simulation. 
The experiment is repeated for $R=10$ times. Let 
$$
\mathcal{E}_{L^2}=  \sqrt{\frac{1}{R} \sum_{r=1}^{R}\left\|\mathbb{E}\big[u(T)\big]- \Psi\big[u_L^{(r)}(t_{N_L})\big]\right\|^2 } \,,
$$
$$
\mathcal{E}_{H^1}= \sqrt{\frac{1}{RM} \sum_{r=1}^{R} \sum_{m=1}^M \left\|\mathbb{E}\big[\nabla u(t_m)\big]- \Psi\big[\nabla u_L^{(r)}(t_m)\big]\right\|^2 }\,,
$$
where $u$ is the exact solution and $u_L^{(r)}$ is the EMLMC solution of the $r$-th replica. 
Hence, $\mathcal{E}_{L^2}$ and $\mathcal{E}_{H^1}$ represent the numerical error in $L^2$ and $H^1$ norms, respectively. 
With the above choice of discretization and sampling strategy, we expect both quantities converge quadratically with respect to $h_L$ as derived in Corollary \ref{eq:error_analysis} .   

\begin{table}[htp]
\centering
{\footnotesize
\caption{Numerical errors of the EMLMC. }
\label{tab:t2u}%
\begin{tabular}{|c|c|c|c|c|}
\hline
$L$& $\mathcal{E}_{L^2}$ & rate & $\mathcal{E}_{H^1}$ & rate  \\
\hline
1 & $6.11\times 10^{-2}$ &   -  & $5.60\times 10^{-1}$ &   -  \\
2 & $1.43\times 10^{-2}$ &  2.10  & $1.50\times 10^{-1}$ &  1.90   \\
3 & $3.60 \times 10^{-3}$ &  1.99   & $3.81\times 10^{-2}$ & 1.98    \\
\hline
\end{tabular}
}
\end{table}

The EMLMC numerical errors as $L$ varies from 1 to 3 are listed in Table \ref{tab:t2u}. It is observed that both $\mathcal{E}_{L^2}$ and $\mathcal{E}_{H^1}$ converge at the order of nearly 2 with respect to $h_L$, which matches our expectation.

\subsection{Test 2}
Next, we use a test problem to demonstrate the effectiveness of the EMLMC method. 
The same test problem was considered in \cite{luo2017ensemble} for testing the first-order, ensemble-based Monte Carlo method and a similar computational setting was used in \cite{nobile2009analysis} to compare numerical approaches for parabolic equations with random coefficients.

The test problem is associated with the zero forcing term $f$, zero initial conditions,
and homogeneous Dirichlet boundary conditions on the top, bottom
and right edges of the domain but inhomogeneous Dirichlet
boundary condition, $u= y(1-y)$, on the left edge. The random
coefficient varies in the vertical direction and has the following
form
\begin{equation}
a(\omega, \bx) = a_0 + \sigma \sqrt{\lambda_0} Y_0(\omega) +
\sum_{i=1}^{n_f} \sigma \sqrt{\lambda_i}\left[Y_i(\omega)\cos(i\pi
y) + Y_{n_f+i}(\omega)\sin(i\pi y)\right] \label{eq:a}
\end{equation}
with $\lambda_0 = \frac{\sqrt{\pi} L_c}{2}$, $\lambda_i =
\sqrt{\pi} L_c e^{-\frac{(i \pi L_c)^2}{4}}$ for $i=1, \ldots,
n_f$ and $Y_0$, \ldots, $Y_{2n_f}$ are uncorrelated random
variables with zero mean and unit variance. In the following numerical test,
we take $a_0= 1$, $L_c = 0.25$, $\sigma = 0.15$, $n_f = 3$ and
assume the random variables $Y_0, \ldots, Y_{2n_f}$ are
independent and uniformly distributed in the interval $[-\sqrt{3},
\sqrt{3}]$.
We use quadratic finite elements for spatial discretization and simulate the system over the
time interval $[0, 0.5]$.

We use the EMLMC method to analyze some stochastic information of the system such as the expectation of the solution at final time.
More precisely, we apply the EMLMC with the maximum level $L= 2$, the mesh size $h_l =  \sqrt{2} \cdot 2^{-3-l}$, time step size $\Delta t_l =  2^{-4-l}$, and number of samples $J_l= 2^{4(L-l)+1}$ at the $l$-th level, for $l= 0, \ldots, L$. 
Note that if the samples does not satisfy the stability condition \eqref{con1}, we will divide the sample set into small subsets so that \eqref{con1} holds on each smaller group. Since the diffusion coefficient function is independent of time, such a process can be efficiently implemented for ensemble calculations at each level. 
The EMLMC solution at the final time $T$ is 
$$\Psi_h^E(\bx) = \Psi[u_L^E(t_{N_L})], $$ 
which is shown in Figure \ref{fig:comp} (left). 
Note that due to the small size of the problem, we apply LU factorization in solving the linear systems. 
Since the exact solution is unknown, to quantify the performance of the EMLMC method, we compare the result with that of the MLMC finite element simulations using the same computational setting. 
The same set of sample values is used, thus, the only difference is $J_l$ individual finite element simulations are implemented at $l$-th level.

\begin{figure}[htp]
\centering
\begin{minipage}{0.32\textwidth}
\includegraphics[width=.9\textwidth]{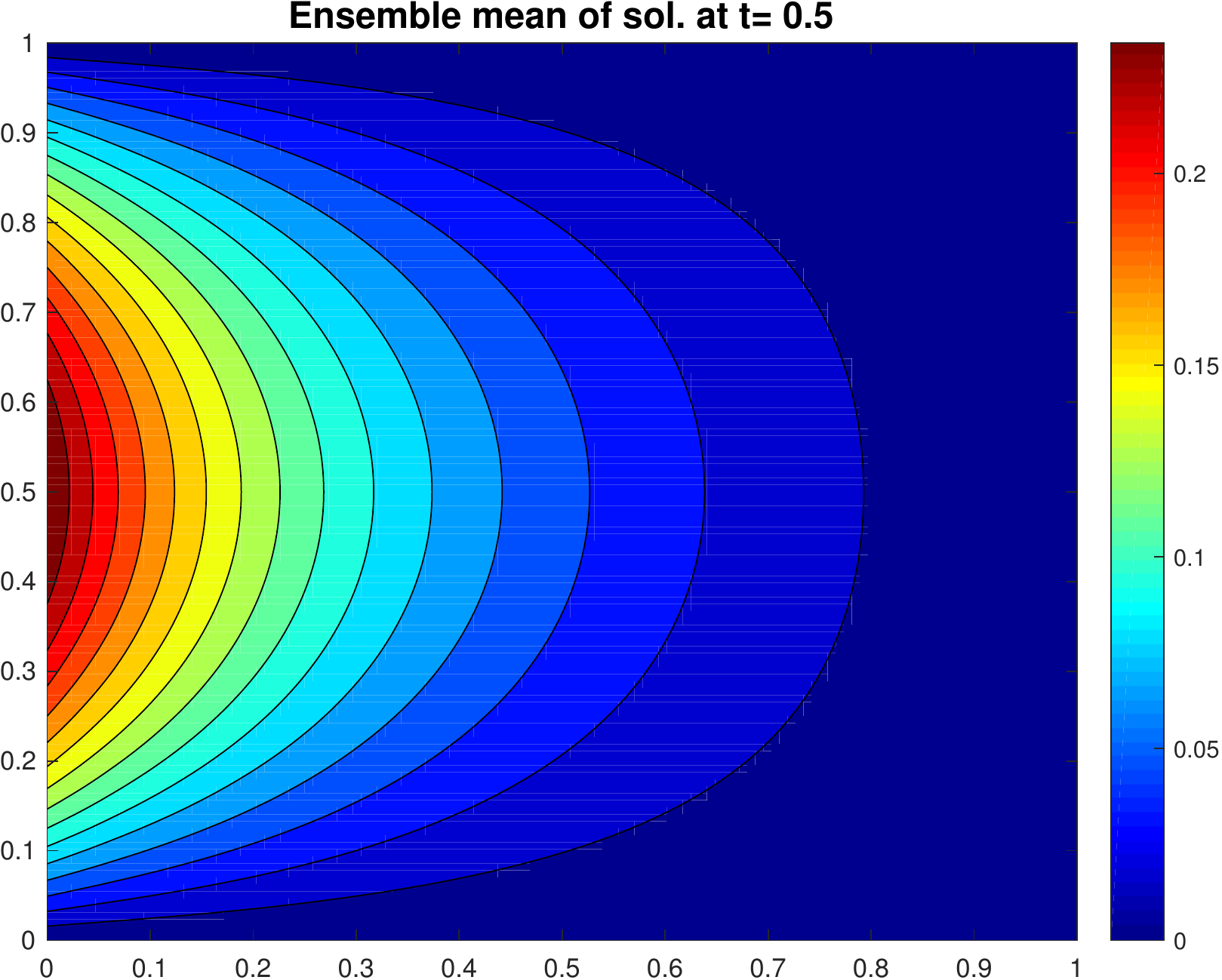}
\end{minipage}
\begin{minipage}{0.32\textwidth}
\includegraphics[width=.9\textwidth]{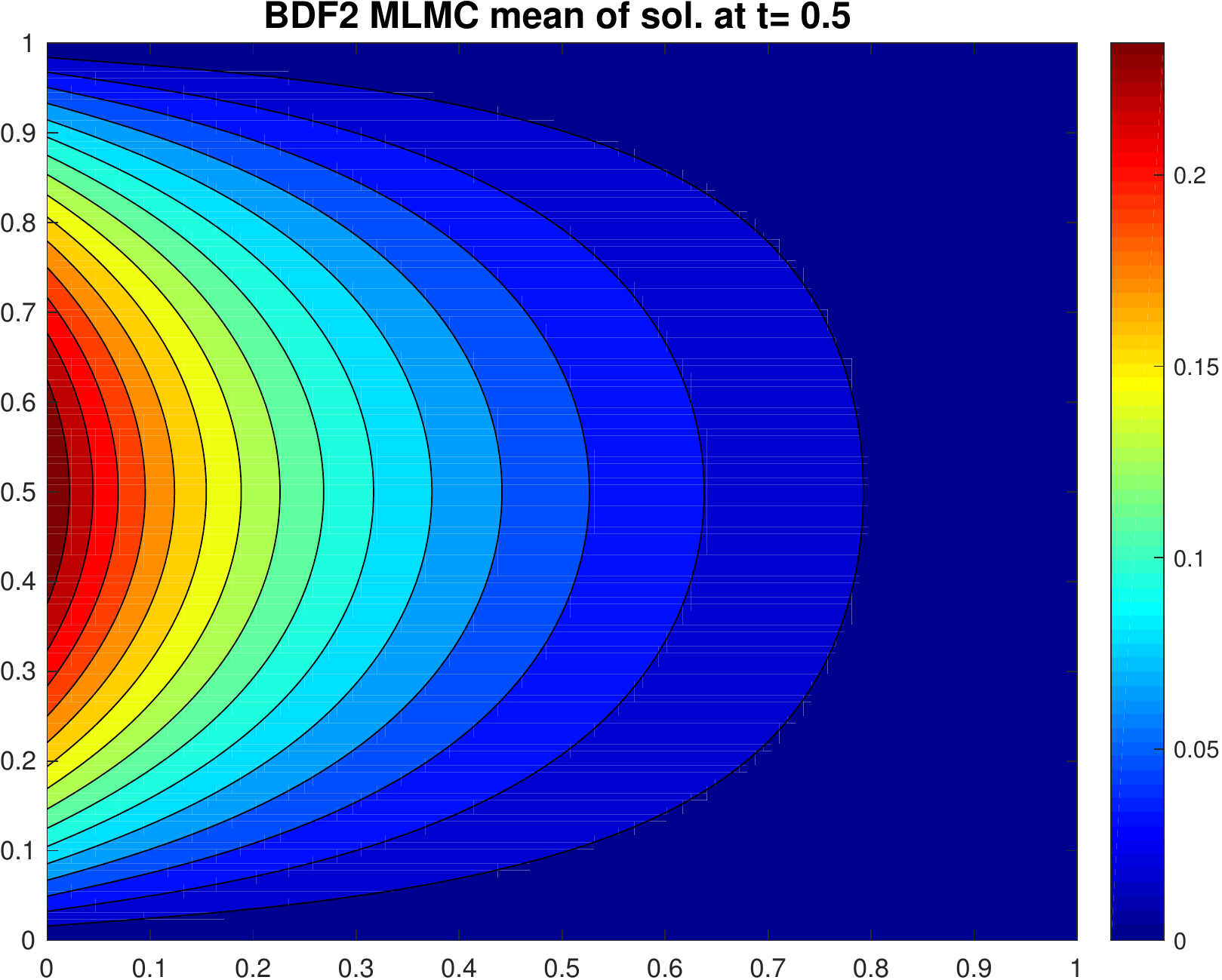}
\end{minipage}
\begin{minipage}{0.32\textwidth}
\includegraphics[width=.9\textwidth]{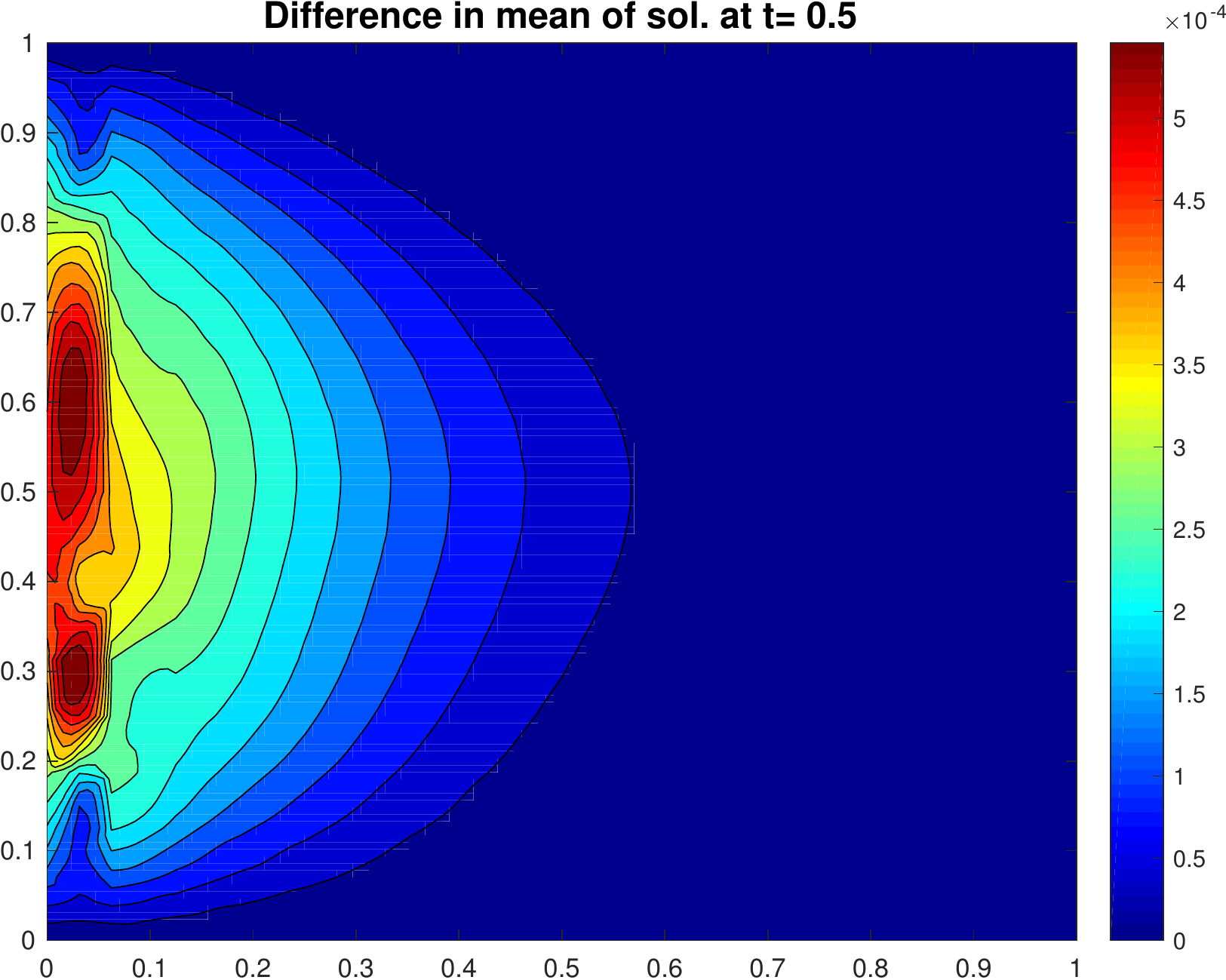}
\end{minipage}
\caption{Comparison of the simulation mean: ensemble simulations (left), finite element simulations (middle), and the associated difference (right).} \label{fig:rand_mean_std}
\label{fig:comp}
\end{figure}

Denote the approximated expected value of the latter approach by
 $$\Psi_h^I(\bx) = \Psi[u_L^I(t_{N_L})],$$
which is shown in Figure \ref{fig:comp} (middle). 
Note that for a fair comparison, we also use the LU factorization in solving all the linear systems in individual simulations. 
The difference between $\Psi_h^E$ and $\Psi_h^I$, $|\Psi_h^E-\Psi_h^I|$, is shown in Figure \ref{fig:comp} (right). 
It is observed that the difference is on the order of $10^{-4}$, which indicates the EMC method is able to provide the same accurate approximation as individual simulations.
However, the CPU time for the ensemble simulation is $2.65\times 10^3$ seconds, while that of the individual simulations is $1.01\times 10^4$ seconds.

\section{Conclusions}\label{sec:con}

A multilevel Monte Carlo ensemble method is developed in this paper to solve second-order random parabolic partial differential equations. This method naturally combines the ensemble-based, multilevel Monte Carlo sampling approach with a second-order, ensemble-based time stepping scheme so that the computational efficiency for seeking stochastic solutions is improved. Numerical analysis shows the numerical approximation achieve the optimal order of convergence.  As a next step, we will extend the method to large-scale, nonlinear partial differential equations, in which we will deal with nonlinearity of the system and use iterative block solver to solve high-dimensional linear systems efficiently.  

\bibliographystyle{siamplain}

\bibliography{ensemble_MLMC}
\end{document}